\documentclass[10pt]{amsart}

 \usepackage{hyperref}

\hypersetup{colorlinks=true, urlcolor=blue, citecolor=blue, linkcolor=blue}

\usepackage{url}

\usepackage{amsmath,amsfonts, latexsym,graphicx, footmisc, amssymb, mathtools}


\newcommand{\U}{{\mathcal U}}
\newcommand{\0}{{\mathbf 0}}
\newcommand{\C}{{\mathbb C}}
\newcommand{\Z}{{\mathbb Z}}

\newcommand{\cL}{{\mathbb L}}
\newcommand{\D}{{\mathbb D}}

\newcommand{\hyp}{{\mathbb H}}

\newcommand{\Adot}{\mathbf A^\bullet}
\newcommand{\Bdot}{\mathbf B^\bullet}

\newcommand{\Idot}{\mathbf I^\bullet}

\newcommand{\Pdot}{\mathbf P^\bullet}

\newcommand{\Ndot}{\mathbf N^\bullet}

\newcommand{\coker}{{\operatorname{coker}}}

\newtheorem{defn0}{Definition}[section]
\newtheorem{prop0}[defn0]{Proposition}
\newtheorem{conj0}[defn0]{Conjecture}
\newtheorem{thm0}[defn0]{Theorem}
\newtheorem{lem0}[defn0]{Lemma}
\newtheorem{corollary0}[defn0]{Corollary}
\newtheorem{example0}[defn0]{Example}
\newtheorem{remark0}[defn0]{Remark}
\newtheorem{question0}[defn0]{Question}

\newenvironment{defn}{\begin{defn0}}{\end{defn0}}
\newenvironment{prop}{\begin{prop0}}{\end{prop0}}

\newenvironment{thm}{\begin{thm0}}{\end{thm0}}
\newenvironment{lem}{\begin{lem0}}{\end{lem0}}
\newenvironment{cor}{\begin{corollary0}}{\end{corollary0}}

\newenvironment{rem}{\begin{remark0}\rm}{\end{remark0}}

\newcommand{\defref}[1]{Definition~\ref{#1}}
\newcommand{\propref}[1]{Proposition~\ref{#1}}
\newcommand{\thmref}[1]{Theorem~\ref{#1}}
\newcommand{\lemref}[1]{Lemma~\ref{#1}}
\newcommand{\corref}[1]{Corollary~\ref{#1}}

\newcommand{\secref}[1]{Section~\ref{#1}}

\title[IC and Perverse Eigenspaces of the Monodromy]{Intersection Cohomology and Perverse Eigenspaces of the Monodromy}

\subjclass[2010]{32B15, 32C18, 32B10, 32S25, 32S15, 32S55}
\keywords{Intersection cohomology, eigenspaces, hypersurface, perverse sheaves, monodromy}

\author{David B. Massey}

\date{}

\begin{document}

\begin{abstract} We describe the relationship between intersection cohomology
with twisted coefficients and the perverse sheaves which play the role of the eigenspaces for the Milnor monodromy of an affine hypersurface.
\end{abstract}

\maketitle




\section{Introduction} The Monodromy Theorem (\cite{clemens}, \cite{sgavii1}, \cite{landman}, \cite{lemono1}) tells us that the classical Milnor monodromy for an affine hypersurface is quasi-unipotent; over $\C$, this means that the eigenvalues of the monodromy of the Milnor fiber at each point in the hypersurface are  roots of unity. We wish to look at this in the category of perverse sheaves and see what it has to do with intersection cohomology, but first we need to describe our set-up; general references for this background are \cite{BBD}, \cite{inthom2}, \cite{kashsch},  \cite{dimcasheaves}, \cite{levan}, and \cite{schurbook}.

\bigskip

Suppose that $\U$ is a non-empty open subset of $\C^{n+1}$, where $n\geq 1$, and that $f:\U\rightarrow\C$ is a reduced, nowhere locally constant, complex analytic function. Then $X:=V(f)=f^{-1}(0)$ is a hypersurface in $\U$ of pure dimension $n$. Furthermore, since $f$ is reduced, the singular set $\Sigma$ of $X$ is equal to the intersection of the hypersurface with the critical locus, $\Sigma f$, of $f$. 

If we use $\C$ for our base ring for perverse sheaves, then the shifted constant sheaf $\C^\bullet_\U[n+1]$ is perverse on $\U$, as is the shifted constant sheaf $\C^\bullet_X[n]$ on $X$. Furthermore, on $X$, there are the perverse sheaves of nearby and vanishing cycles of $\C^\bullet_\U[n+1]$ along $f$, denoted, respectively, as $\psi_f[-1]\C^\bullet_\U[n+1]$ and $\phi_f[-1]\C^\bullet_\U[n+1]$. The stalk cohomology of these complexes at a point $x\in X$ yield the ordinary cohomology and reduced cohomology of the Milnor fiber $F_{f,x}$ of $f$ at $x$ with a shift.

There are monodromy automorphisms $T_f$ and $\widetilde T_f$ on the nearby and vanishing cycles, respectively. On stalk cohomology, these yield the ordinary Milnor monodromy automorphisms. For each root of unity $\xi$ (actually, for any complex number), we may consider the perverse eigenspaces of these automorphisms, namely $\ker\{\xi\operatorname{id}-T_f\}$ and $\ker\{\xi\operatorname{id}-\widetilde T_f\}$. It is important to note that the stalk cohomology of these perverse kernels need {\bf not} be isomorphic to the eigenspaces of the monodromy on the stalk cohomology.

When $\xi=1$, we can relate $\ker\{\xi\operatorname{id}-\widetilde T_f\}$ to the intersection cohomology $\Idot_X$, with constant coefficients, on $X$; in fact, we can do this over $\Z$, not just over $\C$. We show:
\begin{thm} \textnormal{(\propref{prop:easyprop} and \thmref{thm:1theorem})}\label{intro:thm1} Using $\Z$ as our base ring, there is an isomorphism $\Z_X[n]\cong \ker\{\operatorname{id}- T_f\}$, and the kernel of the canonical perverse surjection $\Z_X[n]\twoheadrightarrow \Idot_X$ is isomorphic to
$\ker\{\operatorname{id}-\widetilde T_f\}$. 

The analogous statements over $\C$ are also true.
\end{thm}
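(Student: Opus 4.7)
The plan is to work with the canonical distinguished triangle of perverse sheaves
$$\Z_X^\bullet[n] \xrightarrow{\operatorname{sp}} \psi_f[-1]\Z_\U^\bullet[n+1] \xrightarrow{\operatorname{can}} \phi_f[-1]\Z_\U^\bullet[n+1] \xrightarrow{+1}$$
(the $[-1]$-shift of the standard vanishing cycles triangle), together with the variation morphism $\operatorname{var}$ and the classical identities $\operatorname{var}\circ\operatorname{can} = T_f - \operatorname{id}$ and $\operatorname{can}\circ\operatorname{var} = \widetilde T_f - \operatorname{id}$. Since $\operatorname{sp}$ is pulled back from $\Z_\U^\bullet[n+1]$, the monodromy $T_f$ acts trivially on its image.

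For the first isomorphism (\propref{prop:easyprop}), I would first show that $\operatorname{sp}$ is a perverse monomorphism, so that the distinguished triangle above becomes an honest short exact sequence of perverse sheaves; in particular $\im(\operatorname{sp}) = \ker(\operatorname{can})$ and $\operatorname{can}$ is a perverse epimorphism. The inclusion $\ker(\operatorname{can}) \subseteq \ker\{\operatorname{id}-T_f\}$ is immediate from $\operatorname{id}-T_f = -\operatorname{var}\circ\operatorname{can}$; for the reverse inclusion one must check that $\operatorname{var}$ is a perverse monomorphism, which follows by Verdier-dualizing the surjectivity of $\operatorname{can}$, using the commutations $\vdual\circ\psi_f[-1]\cong\psi_f[-1]\circ\vdual$ and $\vdual\circ\phi_f[-1]\cong\phi_f[-1]\circ\vdual$ together with the self-duality of $\Z_\U^\bullet[n+1]$, which exchanges $\operatorname{can}$ and $\operatorname{var}$.

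For the second isomorphism I would apply the Snake Lemma in the abelian category of perverse sheaves to the commutative diagram whose two rows are both the short exact sequence above and whose vertical arrows (left to right) are $0$, $\operatorname{id}-T_f$, and $\operatorname{id}-\widetilde T_f$; the leftmost arrow vanishes because $T_f$ is trivial on $\im(\operatorname{sp})$. The resulting long exact sequence is
$$0\to\Z_X^\bullet[n]\to\ker\{\operatorname{id}-T_f\}\to\ker\{\operatorname{id}-\widetilde T_f\}\xrightarrow{\delta}\Z_X^\bullet[n]\to\coker(\operatorname{id}-T_f)\to\coker(\operatorname{id}-\widetilde T_f)\to 0,$$
and by the first part its initial map is an isomorphism, so $\delta$ is injective. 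Since $\operatorname{can}$ is surjective, $\im(\operatorname{id}-T_f) = \im(\operatorname{var}\circ\operatorname{can}) = \im(\operatorname{var})$, and Verdier-dualizing the short exact sequence from the first part identifies $\coker(\operatorname{id}-T_f) = \psi_f[-1]\Z_\U^\bullet[n+1]/\im(\operatorname{var})$ with $\vdual\Z_X^\bullet[n]$. Under this identification the induced map $\Z_X^\bullet[n]\to\vdual\Z_X^\bullet[n]$ is the canonical duality morphism, whose image is $\Idot_X$ by the classical characterization of the intersection complex; hence $\ker\{\operatorname{id}-\widetilde T_f\}\cong\im(\delta)=\ker(\Z_X^\bullet[n]\twoheadrightarrow\Idot_X)$.

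The main obstacle is this last identification: verifying that the map $\Z_X^\bullet[n]\to\vdual\Z_X^\bullet[n]$ produced by the snake-lemma computation coincides canonically with the standard duality morphism, and that its image is precisely $\Idot_X$ even when working over $\Z$. Once that is in hand, the remaining ingredients are essentially a diagram chase combined with the standard compatibilities of Verdier duality with the nearby and vanishing cycle functors.
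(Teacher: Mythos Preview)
Your argument for the first isomorphism coincides with the paper's: both use the two short exact sequences and the factorization $\operatorname{id}-T_f=\operatorname{var}\circ\operatorname{can}$ together with the injectivity of $\operatorname{var}$. For the second isomorphism your route is genuinely different. The paper first proves a lemma identifying $\Ndot_X:=\ker\tau_{{}_X}$ with ${}^{\mu}\hskip -0.02in H^0(m_!m^!\Z_X^\bullet[n])$, then applies the functor $m_!m^!$ to both nearby/vanishing short exact sequences and takes perverse cohomology; the two key inputs are the vanishings ${}^{\mu}\hskip -0.02in H^0(m_*\Z_\Sigma^\bullet[n])=0$ and ${}^{\mu}\hskip -0.02in H^k(m_!\hat m^![1]\Z_\U^\bullet[n+1])=0$ for $k\le 0$. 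Your Snake-Lemma argument is more streamlined and avoids $m_!m^!$ altogether, but the obstacle you correctly flag, once unpacked, needs precisely the same two vanishings: they say that $\Z_X^\bullet[n]$ has no perverse quotient supported in $\Sigma$ and that $j^![1]\Z_\U^\bullet[n+1]$ has no perverse subobject supported in $\Sigma$, and from these it follows that \emph{any} morphism $\Z_X^\bullet[n]\to j^![1]\Z_\U^\bullet[n+1]$ restricting to an isomorphism on $X\setminus\Sigma$ has image $\Idot_X$ (so you do not in fact need to verify that your map is ``the'' canonical duality morphism, only that it is an isomorphism over the smooth locus, which is immediate). Your packaging is conceptually cleaner and highlights the duality $\Z_X^\bullet[n]\leftrightarrow j^![1]\Z_\U^\bullet[n+1]$; the paper's version keeps the perverse-cohomology computations in the foreground and ties the result more visibly to the intermediate-extension description of $\Idot_X$.
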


For $\xi\neq1$, we must use intersection cohomology $\Idot_\U(\xi)$ on $\U$ with a twist by $\xi$ around the hypersurface (we shall make this precise later). Letting $j:X\hookrightarrow\U$ be the inclusion, we show:

\begin{thm}\label{intro:thm2} \textnormal{(\thmref{thm:main2})} Using $\C$ as our base ring, and supposing the $\xi\neq 0$ or $1$, there is an isomorphism of perverse sheaves 
$$j^*[-1]\Idot_\U(\xi)\cong\ker\{\xi^{-1}\operatorname{id}- T_f\}\cong \ker\{\xi^{-1}\operatorname{id}-\widetilde T_f\}.$$
\end{thm}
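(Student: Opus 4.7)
The proof splits into the two claimed isomorphisms. For the right-hand isomorphism $\ker\{\xi^{-1}\operatorname{id}-T_f\}\cong\ker\{\xi^{-1}\operatorname{id}-\widetilde T_f\}$, the plan is to apply the snake lemma to the canonical short exact sequence of perverse sheaves on $X$,
$$0\to\C^\bullet_X[n]\to\psi_f[-1]\C^\bullet_\U[n+1]\to\phi_f[-1]\C^\bullet_\U[n+1]\to 0,$$
arising from the nearby--vanishing cycle triangle. The monodromy restricts to the identity on $\C^\bullet_X[n]$, so $\xi^{-1}\operatorname{id}-T$ acts on the left-hand term as multiplication by the nonzero scalar $\xi^{-1}-1$; the snake lemma then canonically identifies its kernels on the middle and right-hand terms.

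For the main isomorphism $j^*[-1]\Idot_\U(\xi)\cong\ker\{\xi^{-1}\operatorname{id}-T_f\}$, let $i:\U\setminus X\hookrightarrow\U$ be the open inclusion and let $\mathcal L_\xi$ be a rank-one local system on $\U\setminus X$ with monodromy $\xi$ around $X$, so that $\Idot_\U(\xi)=i_{!*}(\mathcal L_\xi[n+1])$. I proceed in three steps. \emph{Step 1 (monodromy twist).} Pulling $\mathcal L_\xi$ back to the canonical cyclic cover of $\U\setminus X$ used to construct $\psi_f$ trivializes it, with the deck transformation acting as $\xi^{-1}$; this gives a canonical isomorphism
$$\psi_f[-1](i_!\mathcal L_\xi[n+1])\cong\psi_f[-1]\C^\bullet_\U[n+1]$$
of perverse sheaves on $X$, intertwining the monodromy on the left with $\xi T_f$ on the right, so that the $1$-eigenspace on the left corresponds to $\ker\{\xi^{-1}\operatorname{id}-T_f\}$ on the right. \emph{Step 2 (insensitivity to the extension).} The cone of the canonical map $i_!\mathcal L_\xi[n+1]\to\Idot_\U(\xi)$ is a perverse sheaf supported on $X$, and $\psi_f$ annihilates complexes supported on $X$; hence $\psi_f[-1]\Idot_\U(\xi)\cong\psi_f[-1](i_!\mathcal L_\xi[n+1])$ canonically and compatibly with the monodromy. \emph{Step 3 (restriction equals invariants).} The natural specialization morphism $j^*[-1]\Idot_\U(\xi)\to\psi_f[-1]\Idot_\U(\xi)$ factors through $\ker\{\operatorname{id}-T\}$, and the crux is to show that this factorization is an isomorphism onto the $1$-eigenspace. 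Combining the three steps yields the theorem.

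The principal obstacle is Step 3, which is the twisted analog of \thmref{intro:thm1}. That the specialization lands in $\ker\{\operatorname{id}-T\}$ is formal and follows from the cyclic-cover construction of $\psi_f$. The nontrivial content is the identification with the full $1$-eigenspace; the plan is to exploit the defining property of the intermediate extension $\Idot_\U(\xi)=i_{!*}(\mathcal L_\xi[n+1])$ — that it admits no nontrivial perverse subobject or quotient supported on $X$ — in conjunction with Verdier duality (which interchanges $\Idot_\U(\xi)$ with $\Idot_\U(\xi^{-1})$) and the standard factorization of $T-\operatorname{id}$ as $\operatorname{var}\circ\operatorname{can}$ in the distinguished triangle $j^*[-1]\Adot\to\psi_f[-1]\Adot\to\phi_f[-1]\Adot$. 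Essentially the same formal argument that establishes \thmref{intro:thm1} should then carry through to the twisted setting.
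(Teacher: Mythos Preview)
Your proposal is correct and, for two of the three pieces, matches the paper almost verbatim. The snake-lemma argument for $\ker\{\xi^{-1}\operatorname{id}-T_f\}\cong\ker\{\xi^{-1}\operatorname{id}-\widetilde T_f\}$ is exactly what the paper does, and your Steps~1--2 (the monodromy twist together with the insensitivity of $\psi_f$ to the choice of extension across $X$) together amount to the paper's identification $\psi_f[-1]\C^\bullet_\U[n+1]\cong\psi_f[-1]\Idot_\U(\xi)$ intertwining $\xi T_f$ with the monodromy $T_f(\xi)$ on the twisted side.

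Where you diverge is Step~3, and here you are making the problem harder than it is. You anticipate that identifying $j^*[-1]\Idot_\U(\xi)$ with the full $1$-eigenspace will require ``essentially the same formal argument that establishes \thmref{intro:thm1}'' --- i.e., the $m_!m^!$ and perverse-cohomology machinery used in the $\xi=1$ case. The paper's point is precisely the opposite: the $\xi\neq 1$ case is \emph{simpler}. Once one knows that $j^*[-1]\Idot_\U(\xi)$ and $j^![1]\Idot_\U(\xi)$ are perverse (which the paper deduces just before the theorem from the simplicity of $\Idot_\U(\xi)$ and the perversity of $i_!\mathcal L_\xi$ and $i_*\mathcal L_\xi$ --- essentially the ``no sub/quotient supported on $X$'' property you mention), the two canonical triangles for $\Idot_\U(\xi)$ are already short exact sequences of perverse sheaves. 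Then $\operatorname{var}$ is injective, so $\ker(\operatorname{can})=\ker(\operatorname{var}\circ\operatorname{can})=\ker\{\operatorname{id}-T_f(\xi)\}$, and the first sequence identifies $\ker(\operatorname{can})$ with $j^*[-1]\Idot_\U(\xi)$. No Verdier-duality detour and no analog of the $\xi=1$ argument is needed. Your ingredients are the right ones; the execution is a one-liner rather than a reprise of the harder theorem.
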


\smallskip

We begin with the case of $\xi=1$; it is, in fact, the complicated case. First, in \secref{sec:compcomp}, we look at some basic results about  the kernel of the canonical perverse surjection $\Z_X[n]\twoheadrightarrow \Idot_X$. Then we prove the main theorem in the $\xi=1$ case in \secref{sec:main1}. We prove the main theorem for $\xi\neq1$ in \secref{sec:mainnot1}.

\smallskip

In \secref{sec:appl}, we will give some down-to-Earth applications of these results, applications which do not, when possible, use the language of the derived category.

\smallskip

As a final comment in this introduction, we should mention that in the algebraic setting with field coefficients, our results can be obtained from the work of Beilinson in \cite{beilinsonglue} (see, also, \cite{reich}) and the work of M. Saito in \cite{mixedhodgemod}. However, we do care about $\Z$ coefficients and the analytic case, and the proofs by these other techniques are not simpler.

\section{The Comparison Complex}\label{sec:compcomp}

We continue with $\U$, $f$, and $X$ as in the introduction. We denote the respective inclusions as follows: $j:X\hookrightarrow\U$, $i:\U\backslash X\hookrightarrow\U$, $m:\Sigma\hookrightarrow X$, and $l:X\backslash\Sigma\hookrightarrow X$. Furthermore, we let $\hat m:=j\circ m$ be the inclusion of $\Sigma$ into $\U$. In this section, our base ring is $\Z$ (though all statements hold with base ring $\C$).

In this setting, there (at least) three canonical perverse sheaves on $X$: the shifted constant sheaf $\Z^\bullet_X[n]\cong j^*[-1]\Z^\bullet_\U[n+1]$, the Verdier dual sheaf $j^![1]\Z^\bullet_\U[n+1]$, and the intersection cohomology complex $\mathbf I^\bullet_X$ (with constant $\Z$ coefficients), which is the intermediate extension to all of $X$ of $\Z^\bullet_{X\backslash \Sigma}[n]$. In addition, there is a canonical surjection 
$$
\Z^\bullet_X[n]\overset{\tau_{{}_X}}{\twoheadrightarrow}\mathbf I^\bullet_X
$$
in the Abelian category $\operatorname{Perv}(X)$ of perverse sheaves (with middle perversity) on $X$ (the surjectivity follows from the fact that $\tau_{{}_X}$ is an isomorphism on $X\backslash\Sigma$ and that the intermediate extension $\Idot_X$ has no non-trivial quotients with support contained in $\Sigma$). There is also a dual injection of $\mathbf I^\bullet_X$ into $j^![1]\Z^\bullet_\U[n+1]$. 

\medskip

We shall focus on the surjection $\tau_{{}_X}: \Z^\bullet_X[n]\twoheadrightarrow\mathbf I^\bullet_X$; one can dualize the results there to obtain results for the dual injection.

\smallskip

Our fundamental definition is:

\begin{defn}\label{defn:compcomplex} The {\bf comparison complex}, $\Ndot_X$, on $X$ is the kernel (in $\operatorname{Perv}(X)$) of $\tau_{{}_X}$. Hence, by definition, the support of $\Ndot_X$ is contained in $\Sigma$ and there is a short exact sequence
$$
0\rightarrow \Ndot_X\rightarrow\Z^\bullet_X[n]\xrightarrow{\tau_{{}_X}}\mathbf I^\bullet_X\rightarrow0.
$$
\end{defn}

\medskip

\begin{rem}\label{rem:reduced}

While $\Ndot_X$ is interesting as a perverse sheaf, on the level of stalks, $\Ndot_X$ merely gives the shifted, reduced intersection cohomology of $X$. To be precise, the long exact sequence on stalk cohomology at $x\in X$ yields a short exact sequence
$$
0\rightarrow \Z\rightarrow H^{-n}(\Idot_X)_x\rightarrow H^{-n+1}(\Ndot_X)_x\rightarrow 0
$$
and, for all $k\neq -n+1$, isomorphisms
$$
H^{k-1}(\Idot_X)_x\cong H^{k}(\Ndot_X)_x.
$$
If we let $\widetilde{IH}$ denote reduced intersection cohomology, with topological indexing (as is used for intersection homology in \cite{inthom}), then we have
$$
H^{k}(\Ndot_X)_x\cong\widetilde{IH}^{n+k-1}(B^\circ_\epsilon(x)\cap X; \,\Z),
$$
where $B^\circ_\epsilon(x)$ denotes a small open ball, of radius $\epsilon$, centered at $x$.

\end{rem}

\begin{rem} Our interest in the comparison complex arose from considering parameterized hypersurfaces, i.e., hypersurfaces which have smooth normalizations. Our results in this case appear in our joint work with Brian Hepler in \cite{heplermassparam} (see also Hepler's  paper \cite{hepler}), where it was more natural to refer to $\Ndot_X$ as the {\it multiple-point complex}.

\end{rem}

\bigskip

We wish to recall some basic definitions and results on intersection cohomology, real links, and complex links.

\smallskip

Let $B_\epsilon(x)$ denote the closed ball of radius $\epsilon$, centered at $x$, in $\C^{n+1}$, so that its boundary $\partial B_\epsilon(x)$ is a ($2n+1$)-dimensional sphere. We continue to denote the open ball by $B^\circ_\epsilon(x)$.

The {\it real link} of $X$ at $x$ is (the homeomorphism-type of) 
$$K_{X,x}:=X\cap\partial B_\epsilon(x)$$ 
for sufficiently small $\epsilon>0$, which is homotopy-equivalent to $X\cap(B^\circ_\epsilon(x)\backslash\{x\})$. See, for instance, \cite{milnorsing} and \cite{stratmorse}.

\smallskip

The support and cosupport conditions tell us that, for all $x\in X$, the stalk cohomology of $\Idot_X$ has the following properties:
$$
H^k(\Idot_X)_x\cong\begin{cases}\hyp^k(K_{X,x}; \, \Idot_X), \textnormal{ if } k\leq -1; \\
0, \textnormal{ if } k\geq 0.
\end{cases}
$$

In particular, we have:

\begin{prop} If $x$ is an isolated singular point of $X$, then $\Ndot_X$ is  a perverse sheaf which has $x$ as an isolated point in its support, and so $H^k(\Ndot_X)_x=0$ if $k\neq 0$, and
$$
H^0(\Ndot_X)_x\cong \widetilde H^{n-1}(K_{X,x}; \, \Z).
$$
\end{prop}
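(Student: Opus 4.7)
The proof plan is to establish in order: (i) that $x$ is isolated in $\supp\Ndot_X$; (ii) that this forces $H^k(\Ndot_X)_x = 0$ for $k \neq 0$; and (iii) the identification $H^0(\Ndot_X)_x \cong \widetilde H^{n-1}(K_{X,x};\Z)$.

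For (i), by \defref{defn:compcomplex} we have $\supp\Ndot_X \subseteq \Sigma$, and since $x$ is an isolated singular point, $x$ is isolated in $\Sigma$, and hence in $\supp\Ndot_X$. For (ii), I would invoke the standard fact that a middle-perversity perverse sheaf on a complex analytic space whose support is an isolated point $x$ is (locally near $x$) a skyscraper sheaf with a finitely generated abelian group placed in degree $0$: the support condition of the perverse $t$-structure gives $H^k(\Ndot_X)_x = 0$ for $k > 0$, while the cosupport condition applied to the $0$-dimensional stratum $\{x\}$ gives $H^k(\Ndot_X)_x = 0$ for $k < 0$.

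For (iii), I would combine the long exact sequence on stalks recorded in \remref{rem:reduced} with the support/cosupport formula for $\Idot_X$ displayed just above the proposition. When $n \geq 2$, \remref{rem:reduced} gives $H^0(\Ndot_X)_x \cong H^{-1}(\Idot_X)_x$, which by the support/cosupport formula equals $\hyp^{-1}(K_{X,x}; \Idot_X)$. Since $x$ is isolated in $\Sigma$, for sufficiently small $\epsilon$ the real link $K_{X,x}$ lies in the smooth locus $X \setminus \Sigma$, so $\Idot_X|_{K_{X,x}}$ is the shifted constant sheaf $\Z^\bullet_{K_{X,x}}[n]$ and the hypercohomology equals $H^{n-1}(K_{X,x};\Z)$; this coincides with $\widetilde H^{n-1}(K_{X,x};\Z)$ because $n - 1 \geq 1$. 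When $n = 1$, \remref{rem:reduced} instead produces the short exact sequence $0 \to \Z \to H^{-1}(\Idot_X)_x \to H^0(\Ndot_X)_x \to 0$, and the same smoothness-of-the-link observation identifies $H^{-1}(\Idot_X)_x$ with $H^0(K_{X,x};\Z)$.

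The main obstacle is the $n = 1$ case: one must verify that the leftmost map $\Z \to H^0(K_{X,x};\Z)$ in the short exact sequence is the diagonal sending $1$ to the class of the constant function on every component of the link, so that the cokernel is precisely $\widetilde H^0(K_{X,x};\Z)$. This amounts to tracing through the connecting map: on $X \setminus \Sigma$ the canonical map $\tau_{{}_X} : \Z^\bullet_X[n] \to \Idot_X$ is an isomorphism, so on stalks at $x$ the induced map on $H^{-n}$ is the natural restriction-to-link map of a locally constant function, which is exactly the diagonal.
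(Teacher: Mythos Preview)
Your proposal is correct and follows the route the paper itself implicitly indicates: the paper states this proposition with no explicit proof, prefacing it with ``In particular, we have:'' immediately after \remref{rem:reduced} and the support/cosupport formula for $\Idot_X$, so it is intended as a direct consequence of those two ingredients---exactly what you have written out. Your separate treatment of the $n=1$ case, including the verification that the map $\Z\to H^0(K_{X,x};\Z)$ is the diagonal, is a level of detail the paper omits but which is implicit in the reduced-intersection-cohomology identification of \remref{rem:reduced}.
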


\bigskip

The {\it complex link} of $X$ at $x$ is (the homeomorphism-type of) 
$$\cL_{X, x}:=B^\circ_\epsilon(x)\cap X\cap L^{-1}(\gamma),$$
where $L$ is a generic affine linear form such that $L(x)=0$ and $0<|\gamma|\ll \epsilon\ll 1$ (see, for instance, \cite{stratmorse} and \cite{levan}). As $X$ is a hypersurface (and so, a local complete intersection), $\cL_{X, x}$ has the homotopy-type of a bouquet of a finite number of ($n-1$)-dimensional spheres. The number of spheres in this bouquet equals $\operatorname{rank}\widetilde H^{n-1}(\cL_{X, x};\,\Z))$, which equals the rank of $H^0(\phi_L[-1]\Z^\bullet_X[n])_x$. Furthermore, it is known that this rank is equal to an intersection number,
$$
\operatorname{rank}H^0(\phi_L[-1]\Z^\bullet_X[n])_x=\big(\Gamma_{f, L}^1\cdot V(L)\big)_x,
$$
where $\Gamma_{f, L}^1$ is the relative polar curve of $f$ with respect to $L$; see Corollary 2.6 of \cite{gencalcvan} (though this was known earlier by Hamm, L\^e, Siersma, and Teissier). 

\smallskip

We can prove a non-trivial proposition merely using the defining short exact sequence in \defref{defn:compcomplex}, and so this result really follows just from the surjectivity of $\tau_{{}_X}$. Compare with Proposition 6.1.22 of \cite{dimcasheaves}.

\begin{prop}\label{prop:easycool}Suppose that $x$ is an isolated singular point of $X$. Then,
$$
\operatorname{rank} H^{n-1}(\cL_{X, x};\, \Z)-\operatorname{rank} H^{n-1}(K_{X, x};\, \Z)=\operatorname{rank} H^0(\phi_L[-1]\Idot_X)_x\geq 0,$$
where $L$ is a generic affine linear form such that $L(x)=0$.
\end{prop}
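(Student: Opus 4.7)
The plan is to apply the $t$-exact perverse vanishing-cycles functor $\phi_L[-1]$ to the defining short exact sequence of \defref{defn:compcomplex}, obtaining
$$0\to\phi_L[-1]\Ndot_X\to\phi_L[-1]\Z^\bullet_X[n]\to\phi_L[-1]\Idot_X\to 0$$
in the category of perverse sheaves on $X$ (supported on $V(L)\cap X$), and then to read off the rank identity by taking the stalk long exact sequence at $x$.

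Under the assumption that $x$ is isolated in $\Sigma$, each of the three perverse sheaves above is supported on $\{x\}$ in a neighborhood of $x$. The proposition immediately preceding \propref{prop:easycool} gives this for $\Ndot_X$ (hence for $\phi_L[-1]\Ndot_X$); for the other two, $\Z^\bullet_X[n]$ and $\Idot_X$ are constructible with respect to the Whitney stratification $\{X\setminus\{x\},\{x\}\}$, so for $L$ chosen generically (so that the only stratified critical point of $L|_X$ near $x$ is $x$ itself) the supports of $\phi_L[-1]\Z^\bullet_X[n]$ and $\phi_L[-1]\Idot_X$ are also contained in $\{x\}$ near $x$. A perverse sheaf supported at a point has stalk cohomology concentrated in degree $0$, so the long exact sequence on stalks at $x$ collapses to a short exact sequence of abelian groups
$$0\to H^0(\phi_L[-1]\Ndot_X)_x\to H^0(\phi_L[-1]\Z^\bullet_X[n])_x\to H^0(\phi_L[-1]\Idot_X)_x\to 0.$$

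It remains to identify the first two terms. The middle one has rank $\operatorname{rank}\widetilde H^{n-1}(\cL_{X,x};\Z)$ by the complex-link formula recalled just before the statement. For the leftmost, since $\Ndot_X$ is supported on $\{x\}\subseteq V(L)$ and any Milnor fiber of $L$ at $x$ misses $x$, one has $\psi_L\Ndot_X=0$ near $x$; the canonical triangle $i^*\Adot\to\psi_L\Adot\to\phi_L\Adot\xrightarrow{+1}$ (for $i\colon V(L)\hookrightarrow\U$) applied to $\Adot=\Ndot_X$ then forces $\phi_L\Ndot_X\cong\Ndot_X[1]$, i.e.\ $\phi_L[-1]\Ndot_X\cong\Ndot_X$. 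So the left stalk equals $H^0(\Ndot_X)_x\cong\widetilde H^{n-1}(K_{X,x};\Z)$, by the preceding proposition. Taking ranks in the short exact sequence and noting that, since both $K_{X,x}$ and $\cL_{X,x}$ are nonempty, the $\widetilde H^0$-versus-$H^0$ discrepancy cancels across the difference, yields the claimed equality; nonnegativity is automatic because the right-hand side is a rank.

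The main technical obstacle is the support assertion for $\phi_L[-1]\Z^\bullet_X[n]$ and $\phi_L[-1]\Idot_X$: this is exactly where the genericity hypothesis on $L$ enters, via the standard stratified-Morse fact that for a generic affine linear form $L$ vanishing at $x$, the only stratified critical point of $L|_X$ near $x$ is $x$ itself. Everything else in the argument is formal manipulation with the short exact sequence and the elementary computation of $\phi_L[-1]$ on a perverse sheaf already supported at $\{x\}$.
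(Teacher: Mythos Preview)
Your proof is correct and follows essentially the same route as the paper's: apply $\phi_L[-1]$ to the defining short exact sequence for $\Ndot_X$, use that all three resulting perverse sheaves are supported at $\{x\}$ near $x$ to collapse the stalk long exact sequence to a short exact sequence in degree $0$, identify $H^0(\phi_L[-1]\Ndot_X)_x\cong H^0(\Ndot_X)_x\cong\widetilde H^{n-1}(K_{X,x};\Z)$, and cancel the reduced/unreduced discrepancy. Your justifications for the support claims (via stratified genericity of $L$) and for $\phi_L[-1]\Ndot_X\cong\Ndot_X$ (via vanishing of $\psi_L\Ndot_X$) are slightly more explicit than the paper's, but the argument is the same.
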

\begin{proof} Applying $\phi_L[-1]$ to the defining short exact sequence for $\Ndot_x$, we obtain the short exact sequence
$$
0\rightarrow \phi_L[-1]\Ndot_X\rightarrow\phi_L[-1]\Z^\bullet_X[n]\rightarrow\phi_L[-1]\mathbf I^\bullet_X\rightarrow0,
$$
in which each term has support contained in $\{x\}$ locally; this implies that the long exact sequence on stalk cohomology at $x$ yields a short exact sequence
$$
0\rightarrow H^0(\phi_L[-1]\Ndot_X)_x\rightarrow H^0(\phi_L[-1]\Z^\bullet_X[n])_x\rightarrow H^0(\phi_L[-1]\mathbf I^\bullet_X)_x\rightarrow0.
$$

However, since $x$ is an isolated singular point of $X$, $x$ is an isolated point in the support of $\Ndot_x$; hence, 
$$
H^0(\phi_L[-1]\Ndot_X)_x\cong H^0(\Ndot_X)_x\cong \widetilde H^{n-1}(K_{X,x}; \, \Z).
$$
Therefore, from the previous short exact sequence, we obtain
$$
\operatorname{rank} \widetilde H^{n-1}(\cL_{X, x};\, \Z)-\operatorname{rank} \widetilde H^{n-1}(K_{X, x};\, \Z)= $$
$$
\operatorname{rank} H^0(\phi_L[-1]\Z^\bullet_X[n])_x-\operatorname{rank}H^0(\phi_L[-1]\Ndot_X)_x=\operatorname{rank} H^0(\phi_L[-1]\Idot_X)_x.
$$

Finally, the difference in the ranks of the reduced cohomologies equals the difference without the reductions.
\end{proof}

\smallskip

\begin{rem} Suppose that $X$ is a pure-dimensional (i.e., connected) local complete intersection of dimension $d$. Then, the result of L\^e in \cite{levan} implies that $\Z_X^\bullet[d]$ is perverse and, again, the canonical map from $\Z_X^\bullet[d]$ to the intersection cohomology on $X$ is a surjection. If $X$ has a isolated singular point at $x$, then the result of \propref{prop:easycool} remains true if one replaces $n$ with $d$.
\end{rem}

\section{The Main Theorem for $\xi=1$}\label{sec:main1}

We continue with the notation from the previous section and the introduction, and we continue to use $\Z$ as our base ring.

\medskip

Recall that
$$T_f: \psi_f[-1]\Z^\bullet_\U[n+1]\xrightarrow{\cong}\psi_f[-1]\Z^\bullet_\U[n+1]$$ 
and 
$$\widetilde T_f: \phi_f[-1]\Z^\bullet_\U[n+1]\xrightarrow{\cong}\phi_f[-1]\Z^\bullet_\U[n+1]$$
denote the monodromy automorphisms on the nearby and vanishing cycles along $f$, respectively.

Throughout this section and the remainder of the paper, we will use the notation from the introduction. In addition, we let $s:=\dim\Sigma$ (or, when we focus on the germ at $x\in \Sigma$, we will let $s:=\dim_x\Sigma$). 

\smallskip

\smallskip 

\smallskip

Recall that there are two canonical short exacts sequences in $\operatorname{Perv}(X)$:
$$
0\rightarrow j^*[-1]\Z^\bullet_\U[n+1]\rightarrow\psi_f[-1]\Z^\bullet_\U[n+1]\xrightarrow{\operatorname{can}}\phi_f[-1]\Z^\bullet_\U[n+1]\rightarrow 0
$$
and
$$
0\rightarrow \phi_f[-1]\Z^\bullet_\U[n+1]\xrightarrow{\operatorname{var}}\psi_f[-1]\Z^\bullet_\U[n+1]\rightarrow j^![1]\Z^\bullet_\U[n+1]\rightarrow 0,
$$
where $\operatorname{var}\circ\operatorname{can}=\operatorname{id}-T_f$ and $\operatorname{can}\circ\operatorname{var}=\operatorname{id}-\widetilde T_f$. See, for instance, \cite{kashsch}, 8.6.7 and 8.6.8 (but be aware that the $\phi_f$ of Kashiwara and Schapira is our $\phi_f[-1]$), \cite{schurbook} 6.0.4, or Definition 4.2.4 and Remark 4.2.12 of \cite{dimcasheaves}.

\smallskip

From these short exact sequences and the fact that  $\operatorname{var}\circ\operatorname{can}=\operatorname{id}-T_f$, we immediately conclude:

\begin{prop}\label{prop:easyprop} There is an isomorphism of perverse sheaves
$$
\Z^\bullet_X[n]\cong\operatorname{ker}\big\{\operatorname{id}-T_f\big\}.
$$
Dually, there is an isomorphism of perverse sheaves
$$
 j^![1]\Z^\bullet_\U[n+1]\cong\operatorname{coker}\big\{\operatorname{id}-T_f\big\}.
$$

\end{prop}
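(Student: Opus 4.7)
The plan is to read both isomorphisms directly off of the two canonical short exact sequences together with the factorization $\operatorname{id}-T_f=\operatorname{var}\circ\operatorname{can}$. Since we are working in the abelian category $\operatorname{Perv}(X)$, everything reduces to a finite diagram chase; no topology is required beyond what is already built into the recollement axioms recalled in the statement.

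First I would handle the kernel. From the first short exact sequence, $\operatorname{can}$ is surjective with kernel $j^*[-1]\Z^\bullet_\U[n+1]=\Z^\bullet_X[n]$. From the second short exact sequence, $\operatorname{var}$ is injective. Hence, in the abelian category of perverse sheaves,
$$
\ker\{\operatorname{id}-T_f\}\ =\ \ker\{\operatorname{var}\circ\operatorname{can}\}\ =\ \ker\{\operatorname{can}\}\ =\ \Z^\bullet_X[n],
$$
which gives the first isomorphism.

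For the dual statement, the same two facts give
$$
\operatorname{im}\{\operatorname{id}-T_f\}\ =\ \operatorname{im}\{\operatorname{var}\circ\operatorname{can}\}\ =\ \operatorname{im}\{\operatorname{var}\},
$$
using surjectivity of $\operatorname{can}$ for the second equality. Thus, the cokernel of $\operatorname{id}-T_f$ equals the cokernel of $\operatorname{var}$, and by the second short exact sequence this cokernel is $j^![1]\Z^\bullet_\U[n+1]$.

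There is no real obstacle: everything is mechanical once one knows that $\operatorname{can}$ is surjective and $\operatorname{var}$ is injective in $\operatorname{Perv}(X)$, which is exactly the content of the two recalled short exact sequences. The only small point to mention is that we are invoking the standard fact that in an abelian category, $\ker(g\circ f)=\ker(f)$ when $g$ is mono, and $\operatorname{coker}(g\circ f)=\operatorname{coker}(g)$ when $f$ is epi.
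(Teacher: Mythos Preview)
Your proof is correct and is exactly the argument the paper has in mind: the paper simply writes ``From these short exact sequences and the fact that $\operatorname{var}\circ\operatorname{can}=\operatorname{id}-T_f$, we immediately conclude,'' and you have spelled out the elementary abelian-category reasoning behind that sentence.
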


\medskip

As we mentioned in the introduction, this proposition is very unsatisfying, since we get $\Z^\bullet_X[n]$ for the kernel, regardless of whether or not $1$ is an eigenvalue of the Milnor monodromy in some degree $>0$. However, as we shall see, the vanishing cycle analog, $\operatorname{ker}\big\{\operatorname{id}-\widetilde T_f\big\}$, is more interesting.

\smallskip

First, we need a lemma. We use ${}^{\mu}\hskip -0.02in H^k(\Adot)$ to denote the degree $k$ perverse cohomology of a complex (with middle perversity). See Section 10.3 of \cite{kashsch} or Sections 5.1 and 5.2 of \cite{dimcasheaves}.

\begin{lem}\label{lem:first} There is an isomorphism of perverse sheaves
$$
\Ndot_X\cong {}^{\mu}\hskip -0.02in H^0(m_!m^!\Z^\bullet_X[n]).
$$
\end{lem}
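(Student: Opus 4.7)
The plan is to extract ${}^{\mu}H^0(m_!m^!\Z^\bullet_X[n])$ as a subobject of $\Z^\bullet_X[n]$ from the perverse-cohomology long exact sequence of the standard attaching triangle
$$m_!m^!\Z^\bullet_X[n] \to \Z^\bullet_X[n] \to Rl_* l^*\Z^\bullet_X[n] \xrightarrow{+1}$$
associated to the closed/open pair $\Sigma \hookrightarrow X \hookleftarrow X\setminus\Sigma$, and then identify that subobject with $\Ndot_X$.

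First I would observe that $l^*\Z^\bullet_X[n] = \Z^\bullet_{X\setminus\Sigma}[n]$ is perverse on the smooth $n$-manifold $X\setminus\Sigma$, and that by left $t$-exactness of $Rl_*$ with respect to the middle perversity one has ${}^{\mu}H^{-1}(Rl_*\Z^\bullet_{X\setminus\Sigma}[n]) = 0$. Taking perverse cohomology in the triangle above, and using that $\Z^\bullet_X[n]$ is perverse, produces the left-exact sequence
$$0 \to {}^{\mu}H^0(m_!m^!\Z^\bullet_X[n]) \to \Z^\bullet_X[n] \xrightarrow{\alpha} {}^{\mu}H^0(Rl_*\Z^\bullet_{X\setminus\Sigma}[n]),$$
identifying ${}^{\mu}H^0(m_!m^!\Z^\bullet_X[n])$ with $\ker\alpha$.

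What remains is to recognize $\ker\alpha$ as $\Ndot_X = \ker\tau_{{}_X}$. The intermediate extension $\Idot_X$ is, by construction, a perverse subsheaf of ${}^{\mu}H^0(Rl_*\Z^\bullet_{X\setminus\Sigma}[n])$, so $\tau_{{}_X}$ factors as a morphism $\beta : \Z^\bullet_X[n] \twoheadrightarrow \Idot_X \hookrightarrow {}^{\mu}H^0(Rl_*\Z^\bullet_{X\setminus\Sigma}[n])$ with $\ker\beta = \ker\tau_{{}_X}$. To finish, I would show $\alpha = \beta$: both are morphisms in $\operatorname{Perv}(X)$ whose restriction to $X\setminus\Sigma$ is the identity on $\Z^\bullet_{X\setminus\Sigma}[n]$, so by the $(l^*,Rl_*)$-adjunction
$$\operatorname{Hom}_{D^b(X)}(\Z^\bullet_X[n],\, Rl_*\Z^\bullet_{X\setminus\Sigma}[n]) \;\cong\; \operatorname{End}(\Z^\bullet_{X\setminus\Sigma}[n]),$$
both correspond to the identity element and are therefore equal. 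Since $\Idot_X$ injects into ${}^{\mu}H^0(Rl_*\Z^\bullet_{X\setminus\Sigma}[n])$, it follows that $\ker\alpha = \ker\beta = \Ndot_X$.

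The hard part will be precisely this last identification of $\alpha$ with $\beta$: one must be comfortable that a perverse morphism into ${}^{\mu}H^0(Rl_*\Z^\bullet_{X\setminus\Sigma}[n])$ is pinned down by its restriction to the open dense smooth locus $X\setminus\Sigma$, which is exactly what the adjunction calculation above supplies. Everything else — the attaching triangle and the $t$-exactness properties of $Rl_*$ — is formal.
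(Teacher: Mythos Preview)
Your proof is correct and follows the same overall strategy as the paper's: both extract ${}^{\mu}H^0(m_!m^!\Z^\bullet_X[n])$ as the kernel of the canonical map $\gamma:\Z^\bullet_X[n]\to{}^{\mu}H^0(Rl_*\Z^\bullet_{X\setminus\Sigma}[n])$ from the attaching triangle, using left $t$-exactness of $Rl_*$ to kill ${}^{\mu}H^{-1}$, and then identify that kernel with $\Ndot_X$. The one difference is in the last identification: the paper factors the intermediate-extension map through $\Z^\bullet_X[n]$ via ${}^{\mu}H^0(l_!\Z^\bullet_{X\setminus\Sigma}[n])$ and uses the \emph{other} attaching triangle (together with a dimension count showing ${}^{\mu}H^0(m_*m^*\Z^\bullet_X[n])=0$) to conclude $\operatorname{im}\gamma\cong\Idot_X$, whereas your $(l^*,Rl_*)$-adjunction argument pins down $\gamma$ directly and bypasses that extra triangle --- a slightly cleaner route to the same endpoint.
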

\begin{proof} The intersection cohomology complex $ \Idot_X$ on $X$ is the intermediate extension of the shifted constant on $X\backslash\Sigma$, i.e., the image (in $\operatorname{Perv}(X)$) of the canonical morphism
$$
{}^{\mu}\hskip -0.02in H^0(l_!\Z^\bullet_{X\backslash\Sigma}[n])\xrightarrow{\alpha}{}^{\mu}\hskip -0.02in H^0(l_*\Z^\bullet_{X\backslash\Sigma}[n]).
$$
The morphism $\alpha$ factors through the perverse sheaf $\Z^\bullet_X[n]$; $\alpha$ is the composition of the canonical maps
$$
{}^{\mu}\hskip -0.02in H^0(l_!\Z^\bullet_{X\backslash\Sigma}[n])\cong {}^{\mu}\hskip -0.02in H^0(l_!l^!\Z^\bullet_X[n])\xrightarrow{\beta}\Z^\bullet_X[n]\xrightarrow{\gamma}{}^{\mu}\hskip -0.02in H^0(l_*l^*\Z^\bullet_X[n])\cong {}^{\mu}\hskip -0.02in H^0(l_*\Z^\bullet_{X\backslash\Sigma}[n]).
$$

We claim that $\beta$ is a surjection and, hence, $\operatorname{im}\gamma\cong \Idot_X$. To see this, take the canonical distinguished triangle
$$
\rightarrow l_!l^!\Z^\bullet_X[n]\rightarrow \Z^\bullet_X[n]\rightarrow m_*m^*\Z^\bullet_X[n]\xrightarrow{[1]}
$$
and consider a portion of the long exact sequence in $\operatorname{Perv}(X)$ obtained by applying perverse cohomology:
$$
\rightarrow {}^{\mu}\hskip -0.02in H^0(l_!l^!\Z^\bullet_X[n])\xrightarrow{\beta}\Z^\bullet_X[n]\rightarrow {}^{\mu}\hskip -0.02in H^0(m_*m^*\Z^\bullet_X[n])\rightarrow.
$$
We want to show that ${}^{\mu}\hskip -0.02in H^0(m_*m^*\Z^\bullet_X[n])=0$. 

We have
$$
{}^{\mu}\hskip -0.02in H^0(m_*m^*\Z^\bullet_X[n])\cong {}^{\mu}\hskip -0.02in H^0(m_*\Z^\bullet_\Sigma[n]).
$$

\smallskip

Then it is trivial that the complex $\Z^\bullet_\Sigma[s]$ satisfies the support condition, and so ${}^{\mu}\hskip -0.02in H^k(\Z^\bullet_\Sigma[s])=0$ for $k\geq 1$. But 
$$
{}^{\mu}\hskip -0.02in H^0(m_*\Z^\bullet_\Sigma[n])\cong m_*{}^{\mu}\hskip -0.02in H^{n-s}(\Z^\bullet_\Sigma[s]),
$$
which equals $0$ since $n-s\geq 1$. Therefore, $\beta$ is a surjection and $\operatorname{im}\gamma\cong \Idot_X$.

Now take the canonical distinguished triangle
$$
\rightarrow m_!m^!\Z^\bullet_X[n]\rightarrow \Z^\bullet_X[n]\rightarrow l_*l^*\Z^\bullet_X[n]\xrightarrow{[1]}
$$
and consider a portion of the long exact sequence in $\operatorname{Perv}(X)$ obtained by applying perverse cohomology:
$$
\rightarrow {}^{\mu}\hskip -0.02in H^{-1}(l_*l^*\Z^\bullet_X[n])\rightarrow {}^{\mu}\hskip -0.02in H^0(m_!m^!\Z^\bullet_X[n])\rightarrow\Z^\bullet_X[n]\xrightarrow{\gamma} {}^{\mu}\hskip -0.02in H^0(l_*l^*\Z^\bullet_X[n])\rightarrow.
$$

We claim that ${}^{\mu}\hskip -0.02in H^{-1}(l_*l^*\Z^\bullet_X[n])=0$. This is easy; as $\Z^\bullet_X[n]$ is perverse, $l^*\Z^\bullet_X[n]$ is perverse (since it is the restriction to an open subset) and, in particular, satisfies the cosupport condition. By 10.3.3.iv of \cite{kashsch}, $l_*l^*\Z^\bullet_X[n]$ also satisfies the cosupport condition. Thus, ${}^{\mu}\hskip -0.02in H^{-1}(l_*l^*\Z^\bullet_X[n])=0$.

Therefore, ${}^{\mu}\hskip -0.02in H^0(m_!m^!\Z^\bullet_X[n])$ is the kernel of the map $\gamma$, whose image is $\Idot_X$, i.e., this kernel is how we defined $\Ndot_X$, and we are finished.
\end{proof}

Now we can prove the first theorem that we stated in the introduction, \thmref{intro:thm1}.

\begin{thm}\label{thm:1theorem} In $\operatorname{Perv}(X)$, there is an isomorphism
$$
\Ndot_X\cong \operatorname{ker}\big\{\operatorname{id}-\widetilde T_f\big\}.
$$
Dually, there is an isomorphism between $\operatorname{coker}\big\{\operatorname{id}-\widetilde T_f\big\}$ and the cokernel of the canonical injection $\Idot_X\hookrightarrow j^![1]\Z^\bullet_\U[n+1]$.

\end{thm}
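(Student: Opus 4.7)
The plan is to derive both isomorphisms simultaneously via the snake lemma in $\operatorname{Perv}(X)$. Abbreviate $A:=\psi_f[-1]\Z^\bullet_\U[n+1]$, $B:=\phi_f[-1]\Z^\bullet_\U[n+1]$, $D:=j^![1]\Z^\bullet_\U[n+1]$, and let $\alpha:\Z^\bullet_X[n]\hookrightarrow A$ and $\pi:A\twoheadrightarrow D$ denote the unlabeled arrows of the first and second short exact sequences recalled just before \propref{prop:easyprop}. I would place the second short exact sequence $0\to B\xrightarrow{\operatorname{var}} A\xrightarrow{\pi} D\to 0$ on both the top and bottom rows of a ladder, with vertical maps $\operatorname{can}\circ\operatorname{var}=\operatorname{id}-\widetilde T_f$, $\operatorname{var}\circ\operatorname{can}=\operatorname{id}-T_f$, and the zero map on $D$. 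The left square commutes by associativity; the right square commutes because $\pi\circ\operatorname{var}\circ\operatorname{can}=0$ and $\pi$ is surjective, forcing the right vertical map to vanish.

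By \propref{prop:easyprop}, the middle kernel is $\Z^\bullet_X[n]$ and the middle cokernel is $D$, while the kernel and cokernel of the zero map on $D$ are both $D$. Tracing the snake lemma, the map between the middle and right kernels is $\kappa:=\pi\circ\alpha:\Z^\bullet_X[n]\to D$ (since $\ker(\operatorname{var}\circ\operatorname{can})=\ker\operatorname{can}=\operatorname{im}\alpha$), while the map between the two cokernels is the identity on $D$ (since $\operatorname{coker}(\operatorname{var}\circ\operatorname{can})=A/\operatorname{im}\operatorname{var}\cong D$). Because the tail of the six-term sequence is then the isomorphism $D\xrightarrow{\operatorname{id}} D$, the snake sequence collapses to
$$0\to\ker\{\operatorname{id}-\widetilde T_f\}\to\Z^\bullet_X[n]\xrightarrow{\kappa} D\to\operatorname{coker}\{\operatorname{id}-\widetilde T_f\}\to 0,$$
so $\ker\{\operatorname{id}-\widetilde T_f\}\cong\ker\kappa$ and $\operatorname{coker}\{\operatorname{id}-\widetilde T_f\}\cong\operatorname{coker}\kappa$.

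It remains to identify $\kappa=\pi\circ\alpha$ with the canonical composition $\Z^\bullet_X[n]\xrightarrow{\tau_{{}_X}}\Idot_X\hookrightarrow D$. On $X\backslash\Sigma$ the function $f$ is a submersion, so $B=0$ there and both $A$ and $D$ restrict to $\Z^\bullet_{X\backslash\Sigma}[n]$, making $\kappa|_{X\backslash\Sigma}$ the identity. Granted the identification of $\pi\circ\alpha$ with the canonical comparison morphism $j^*[-1]\Z^\bullet_\U[n+1]\to j^![1]\Z^\bullet_\U[n+1]$ whose image is $\Idot_X$, we obtain $\ker\kappa=\Ndot_X$ and $\operatorname{coker}\kappa=\operatorname{coker}\{\Idot_X\hookrightarrow D\}$, yielding both statements of the theorem.

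The main obstacle I anticipate is this final identification: showing that the snake-lemma map $\kappa=\pi\circ\alpha$ genuinely coincides with the canonical comparison $j^*[-1]\to j^![1]$ whose image is $\Idot_X$. One route is to trace through the construction of $\operatorname{can}$ and $\operatorname{var}$, verifying that $\pi\circ\alpha$ is the natural morphism arising from the two standard distinguished triangles for nearby and vanishing cycles. Alternatively, since $\kappa|_{X\backslash\Sigma}$ is the identity, one can exploit that $\Ndot_X$ is the maximal perverse subsheaf of $\Z^\bullet_X[n]$ supported on $\Sigma$ (so $\ker\kappa\subseteq\Ndot_X$ automatically) and argue via Verdier duality that $\operatorname{Hom}_{\operatorname{Perv}(X)}(\Ndot_X,D)=0$, forcing $\ker\kappa=\Ndot_X$.
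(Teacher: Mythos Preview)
Your snake-lemma approach is correct and genuinely different from the paper's argument. The paper instead proves a preliminary lemma that $\Ndot_X\cong{}^{\mu}H^0(m_!m^!\Z^\bullet_X[n])$, then applies the functor $m_!m^!$ to both nearby--vanishing short exact sequences, takes perverse cohomology, and uses that ${}^{\mu}H^0(m_!m^!\operatorname{var})$ is an isomorphism (because $m_!\hat m^![1]\Z^\bullet_\U[n+1]$ lies in ${}^{p}D^{\geq 1}$) to conclude $\ker\{\operatorname{id}-\widetilde T_f\}\cong\ker\{{}^{\mu}H^0(m_!m^!\operatorname{can})\}\cong\Ndot_X$. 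Your route bypasses the preliminary lemma and the $m_!m^!$ machinery: the ladder built from the second short exact sequence, with vertical maps $\operatorname{can}\circ\operatorname{var}$, $\operatorname{var}\circ\operatorname{can}$, and $0$, commutes by pure associativity, and the snake sequence collapses exactly as you describe. What your argument buys is that the two statements (for $\ker$ and $\operatorname{coker}$) fall out simultaneously from a single diagram, and you never need the explicit description of $\Ndot_X$ as ${}^{\mu}H^0(m_!m^!\Z^\bullet_X[n])$.

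The one point to flag is that your ``route 2'' for the final identification, while correct, is not cost-free: showing $\operatorname{Hom}_{\operatorname{Perv}(X)}(\Ndot_X,D)=0$ amounts, via adjunction $(m_!,m^!)$, to showing $\hat m^![1]\Z^\bullet_\U[n+1]\in{}^{p}D^{\geq 1}(\Sigma)$, which is precisely the cosupport computation the paper carries out in its proof. So both arguments rest on the same technical ingredient; yours packages it more economically. Once $\ker\kappa=\Ndot_X$ is established, the dual statement also follows cleanly: $\operatorname{im}\kappa\cong\Idot_X$ is a subobject of $D$ that agrees with $D$ on $X\backslash\Sigma$, and since $\Idot_X$ has neither nonzero subobjects nor nonzero quotients supported on $\Sigma$, this forces $\operatorname{im}\kappa$ to coincide with the image of the canonical injection, so $\operatorname{coker}\kappa$ is the canonical cokernel. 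Your ``route 1'' (identifying $\pi\circ\alpha$ directly with the canonical comparison $j^*[-1]\to j^![1]$) is also valid and is a standard fact about the specialization and cospecialization maps, but it is not something the paper invokes.
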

\begin{proof}
Consider the two nearby-vanishing short exact sequences from the beginning of the section:
$$
0\rightarrow j^*[-1]\Z^\bullet_\U[n+1]\rightarrow\psi_f[-1]\Z^\bullet_\U[n+1]\xrightarrow{\operatorname{can}}\phi_f[-1]\Z^\bullet_\U[n+1]\rightarrow 0
$$
and
$$
0\rightarrow \phi_f[-1]\Z^\bullet_\U[n+1]\xrightarrow{\operatorname{var}}\psi_f[-1]\Z^\bullet_\U[n+1]\rightarrow j^![1]\Z^\bullet_\U[n+1]\rightarrow 0
$$

\bigskip

\noindent and apply $m_!m^!$ to obtain two distinguished triangles:
$$
\rightarrow m_!m^!j^*[-1]\Z^\bullet_\U[n+1]\rightarrow m_!m^!\psi_f[-1]\Z^\bullet_\U[n+1]\xrightarrow{\operatorname{m_!m^!\operatorname{can}}}m_!m^!\phi_f[-1]\Z^\bullet_\U[n+1]\xrightarrow{[1]}
$$
and
$$
\rightarrow m_!m^!\phi_f[-1]\Z^\bullet_\U[n+1]\xrightarrow{\operatorname{m_!m^!\operatorname{var}}}m_!m^!\psi_f[-1]\Z^\bullet_\U[n+1]\rightarrow m_!m^!j^![1]\Z^\bullet_\U[n+1]\xrightarrow{[1]}.
$$

\smallskip

As $j^*[-1]\Z^\bullet_\U[n+1]\cong\Z^\bullet_X[n]$ and the support of $\phi_f[-1]\Z^\bullet_\U[n+1]$ is $\Sigma$, these distinguished triangles become
\begin{equation}
\rightarrow m_!m^!\Z^\bullet_X[n]\rightarrow m_!m^!\psi_f[-1]\Z^\bullet_\U[n+1]\xrightarrow{m_!m^!\operatorname{can}}\phi_f[-1]\Z^\bullet_\U[n+1]\xrightarrow{[1]}\tag{$\dagger$}
\end{equation}

and

\begin{equation}
\rightarrow \phi_f[-1]\Z^\bullet_\U[n+1]\xrightarrow{\operatorname{m_!m^!\operatorname{var}}}m_!m^!\psi_f[-1]\Z^\bullet_\U[n+1]\rightarrow m_!\hat m^![1]\Z^\bullet_\U[n+1]\xrightarrow{[1]}.\tag{$\ddagger$}
\end{equation}

By applying perverse cohomology to ($\dagger$), using that $\phi_f[-1]\Z^\bullet_\U[n+1]$ is perverse, and using the lemma, we immediately conclude that
$$
\Ndot_X\cong {}^{\mu}\hskip -0.02in H^0(m_!m^!\Z^\bullet_X[n])\cong \operatorname{ker}\big\{ {}^{\mu}\hskip -0.02in H^0m_!m^!\operatorname{can}\big\}.
$$

Now, it is an easy exercise to verify that $\hat m^!\Z^\bullet_\U[2n+2-s]$ satisfies the cosupport condition. This implies that ${}^{\mu}\hskip -0.02in H^k(\hat m^!\Z^\bullet_\U[2n+2-s])=0$ for all $k\leq -1$, i.e., 
$$
{}^{\mu}\hskip -0.02in H^k(m_!\hat m^![1]\Z^\bullet_\U[n+1])\cong m_!{}^{\mu}\hskip -0.02in H^k(\hat m^![1]\Z^\bullet_\U[n+1])=0, \textnormal{ for }k\leq -1+(n-s).
$$
As $n-s\geq 1$, ${}^{\mu}\hskip -0.02in H^k(m_!\hat m^![1]\Z^\bullet_\U[n+1])$ is zero for $k\leq 0$. Therefore, applying perverse cohomology to ($\ddagger$), yields that
$$
\phi_f[-1]\Z^\bullet_\U[n+1]\xrightarrow{\operatorname{ {}^{\mu}\hskip -0.02in H^0m_!m^!\operatorname{var}}}{}^{\mu}\hskip -0.02in H^0(m_!m^!\psi_f[-1]\Z^\bullet_\U[n+1])
$$
is an isomorphism.

Finally, we have
$$
\operatorname{ker}\big\{\operatorname{id}-\widetilde T_f\big\}\cong\operatorname{ker}\big\{{}^{\mu}\hskip -0.02in H^0m_!m^!(\operatorname{id}-\widetilde T_f)\big\}\cong \operatorname{ker}\big\{{}^{\mu}\hskip -0.02in H^0m_!m^!\operatorname{can}\circ \,{}^{\mu}\hskip -0.02in H^0m_!m^!\operatorname{var}\big\}\cong
$$
$$
\cong \operatorname{ker}\big\{ {}^{\mu}\hskip -0.02in H^0m_!m^!\operatorname{can}\big\}\cong \Ndot_X.
$$

\smallskip

The dual statement follows by dualizing all of the above arguments.
\end{proof}

\smallskip

\begin{rem}\label{rem:field} While we have proved \thmref{thm:1theorem} with integral coefficients, the proof of the analogous statement with coefficients in an arbitrary field $\mathfrak K$ is precisely the same. That is, if $\Ndot_X(\mathfrak K)$ is defined as the kernel of the canonical surjection from the constant sheaf $\mathfrak K^\bullet_X[n]$ to the intersection cohomology with constant $\mathfrak K$ coefficients $\Idot_X(\mathfrak K)$, then $\Ndot_X(\mathfrak K)$ is isomorphic to the kernel of
$$
\operatorname{id}-\widetilde T^{\mathfrak K}_f: \phi_f[-1]\mathfrak K^\bullet_\U[n+1]\rightarrow\phi_f[-1]\mathfrak K^\bullet_\U[n+1].
$$

With field coefficients, we proved a simple corollary of \thmref{thm:1theorem} in Theorem 3.2 of \cite{icmono}, where we showed that $\operatorname{ker}\big\{\operatorname{id}-\widetilde T^{\mathfrak K}_f\big\}=0$ if and only if $\mathfrak K^\bullet_X[n]$ is isomorphic to $\Idot_X(\mathfrak K)$. 
\end{rem}

\smallskip

As an immediate, somewhat surprising, corollary of the fact that intersection cohomology is a topological invariant, we have:
\begin{cor} The perverse sheaf $\operatorname{ker}\big\{\operatorname{id}-\widetilde T_f\big\}$ is an invariant of the (non-embedded) topological-type of the hypersurface $V(f)$.

Precisely, suppose that $f:\U\rightarrow\C$ and $g:\U\rightarrow\C$ are reduced complex analytic functions, let $X:=V(f)$ and $Y:=V(g)$, and suppose that $H:X\rightarrow Y$ is a homeomorphism. Then,
$$
H\big(\operatorname{supp}\big(\operatorname{ker}\big\{\operatorname{id}-\widetilde T_f\big\}\big)\big)=\operatorname{supp}\big(\operatorname{ker}\big\{\operatorname{id}-\widetilde T_g\big\}\big)
$$
and
$$
H_*\big(\operatorname{ker}\big\{\operatorname{id}-\widetilde T_f\big\}\big)\cong \operatorname{ker}\big\{\operatorname{id}-\widetilde T_g\big\}.
$$
\end{cor}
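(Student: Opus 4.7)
The plan is to reduce the statement to the known topological invariance of intersection cohomology by way of \thmref{thm:1theorem}, which identifies $\ker\{\operatorname{id}-\widetilde T_f\}$ with the comparison complex $\Ndot_X$. All the work then is to check that each ingredient in the definition of $\Ndot_X$ is preserved by the homeomorphism $H$.

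First, I would observe that the dimension $n$ is a topological invariant of $X$ (it is determined by local topology, e.g.\ as one less than the top non-vanishing local Borel--Moore homological degree), so the shift $[n]$ is the same on both sides. Since $H$ is a homeomorphism, $H_*\Z^\bullet_X[n]\cong \Z^\bullet_Y[n]$ trivially. Next, I would invoke the Goresky--MacPherson topological invariance theorem for intersection cohomology: $H_*\Idot_X\cong \Idot_Y$. The canonical surjection $\tau_{{}_X}:\Z^\bullet_X[n]\twoheadrightarrow \Idot_X$ is intrinsic to the topology of $X$ (it is the map onto the intermediate extension from the non-singular locus, and the non-singular locus is itself topologically defined), so $H_*\tau_{{}_X}$ can be identified with $\tau_{{}_Y}$ under the above isomorphisms.

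Applying the exact functor $H_*$ to the defining short exact sequence
\[
0\to \Ndot_X\to \Z^\bullet_X[n]\xrightarrow{\tau_{{}_X}}\Idot_X\to 0
\]
and comparing with the corresponding sequence on $Y$, I would conclude that $H_*\Ndot_X\cong \Ndot_Y$. By \thmref{thm:1theorem}, $\Ndot_X\cong \ker\{\operatorname{id}-\widetilde T_f\}$ and $\Ndot_Y\cong \ker\{\operatorname{id}-\widetilde T_g\}$, giving the desired isomorphism of perverse sheaves. The support statement is then immediate: since $H$ is a homeomorphism, $H(\supp\Ndot_X)=\supp(H_*\Ndot_X)=\supp\Ndot_Y$.

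The only real subtlety, and the step I would flag as the main obstacle, is the invocation of topological invariance of $\Idot_X$ with $\Z$ coefficients and in the analytic (not necessarily algebraic) setting; this requires citing the Goresky--MacPherson/Habegger--Saper-style invariance result for middle-perversity intersection cohomology on pseudomanifolds, applied to the intrinsic Whitney stratification of $X$. Everything else amounts to functoriality of a canonical construction.
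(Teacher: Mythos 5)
Your proposal is correct and is exactly the argument the paper intends: the paper states the corollary as an immediate consequence of Theorem~\ref{thm:1theorem} together with the topological invariance of intersection cohomology, and you have simply written out those details (invariance of $n$, of $\Z^\bullet_X[n]$, of $\Idot_X$, and of the canonical surjection $\tau_{{}_X}$, hence of the kernel $\Ndot_X$). Your closing remark correctly identifies the one genuinely nontrivial ingredient, namely topological invariance of middle-perversity intersection cohomology with $\Z$ coefficients in the analytic setting.
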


\bigskip

In the next section, we look at the other eigenspaces of the monodromy. As we shall see, that case is much simpler than the $\xi=1$ case.

\section{The Main Theorem for $\xi\neq0,1$}\label{sec:mainnot1}
Throughout this section, we use $\C$ as our base ring.

We need to look at the germ of the situation at a point $p\in X$. So, fix a $p\in X$, and choose real $\epsilon$ and $\delta$, $0<\delta\ll\epsilon\ll 1$, such that $B^\circ_\epsilon(p)\subseteq\U$ and
$$
B^\circ_\epsilon(p)\cap f^{-1}(\D^\circ_\delta\backslash\{\0\})\xrightarrow{\hat f}\D^\circ_\delta\backslash\{\0\}
$$
is a locally trivial fibration, whose fiber is the Milnor fiber of $f$ at $p$, where $\hat f$ denotes the restriction of $f$. That this can be done is a standard result on the Milnor fibration inside a ball. Replace the open set $\U$ with the open set $B^\circ_\epsilon(p)\cap f^{-1}(\D^\circ_\delta)$. On the open dense subset $B^\circ_\epsilon(p)\cap f^{-1}(\D^\circ_\delta\backslash\{\0\})$, we take the local system $\mathcal L_\xi$, which is in degree $-(n+1)$, has stalk cohomology $\C$ in degree $-(n+1)$ and is given by the representation 
$$\pi_1\big(B^\circ_\epsilon(p)\cap f^{-1}(\D^\circ_\delta\backslash\{\0\})\big)\xrightarrow{\hat f_*}\pi_1(\D^\circ_\delta\backslash\{\0\})\cong \Z\xrightarrow{h}\operatorname{Aut}(\C),$$
where $\D^\circ_\delta\backslash\{\0\}$ is oriented counterclockwise and  $h$ is the homomorphism which takes the generator $<1>$ to multiplication by $\xi$. Thus, $\mathcal L_\xi$ is the rank $1$ local system, in degree $-(n+1)$, which multiplies by $\xi$ as one goes once around a counterclockwise meridian around the hypersurface $X$.

As in the introduction, we let $\Idot_\U(\xi)$ denote intersection cohomology using the local system $\mathcal L_\xi$; this is the intermediate extension of $\mathcal L_\xi$ to all of $\U$. It is a simple object in the category of perverse sheaves; see \cite{BBD}. Then, it is well-known that $j^*[-1]\Idot_\U(\xi)$ and $j^![1]\Idot_\U(\xi)$ are perverse, but, for lack of a convenient reference, we give a quick argument for this.

In the derived category, consider the two canonical distinguished triangles:

$$
j_*j^*[-1]\Idot_\U(\xi)\rightarrow i_!i^!\Idot_\U(\xi)\rightarrow \Idot_\U(\xi)\xrightarrow{[1]}
$$
and
$$
\Idot_\U(\xi)\rightarrow  i_*i^*\Idot_\U(\xi)\rightarrow j_!j^![1]\Idot_\U(\xi)\xrightarrow{[1]}.
$$

Now, $i^!\Idot_\U(\xi)\cong i^*\Idot_\U(\xi)\cong \mathcal L_\xi$, and $i_!\mathcal L_\xi$ and $i_*\mathcal L_\xi$ are well-known to be perverse (by Proposition 10.3.3 and 10.3.17 of \cite{kashsch}, combined with Theorem 5 of Section 5.1 of \cite{grauertremmertstein}). It follows immediately from this, and the simplicity of $\Idot_\U(\xi)$, that $j_*j^*[-1]\Idot_\U(\xi)$ and $j_!j^![1]\Idot_\U(\xi)$ are perverse. As $j_*\cong j_!$ is simply extension by zero, we conclude that $j^*[-1]\Idot_\U(\xi)$ and $j^![1]\Idot_\U(\xi)$ are perverse.

\medskip

Now we can prove the main theorem for $\xi\neq 0$ or $1$:

\begin{thm}\label{thm:main2} Using $\C$ as our base ring, and supposing the $\xi\neq 0$ or $1$, there is an isomorphism of perverse sheaves 
$$j^*[-1]\Idot_\U(\xi)\cong\ker\{\xi^{-1}\operatorname{id}- T_f\}\cong \ker\{\xi^{-1}\operatorname{id}-\widetilde T_f\}.$$

Dually, 
$$j^![1]\Idot_\U(\xi)\cong\coker\{\xi^{-1}\operatorname{id}- T_f\}\cong \coker\{\xi^{-1}\operatorname{id}-\widetilde T_f\}.$$
\end{thm}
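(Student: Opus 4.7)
The plan is to split the theorem into two complementary claims: the equivalence $\ker\{\xi^{-1}\operatorname{id}- T_f\}\cong\ker\{\xi^{-1}\operatorname{id}-\widetilde T_f\}$, which is purely formal, and the identification of either kernel with $j^*[-1]\Idot_\U(\xi)$, which requires a nearby-cycles comparison; the dual statements will then follow from Verdier duality. For the first equivalence, since $\operatorname{can}$ and $\operatorname{var}$ intertwine $T_f$ and $\widetilde T_f$, they restrict to morphisms between the two $\xi^{-1}$-eigenspaces; on either eigenspace the compositions $\operatorname{var}\circ\operatorname{can}=\operatorname{id}-T_f$ and $\operatorname{can}\circ\operatorname{var}=\operatorname{id}-\widetilde T_f$ act as multiplication by $1-\xi^{-1}\neq 0$, and so both restrictions are isomorphisms.

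For the main isomorphism I would run the argument of \propref{prop:easyprop} with $\Idot_\U(\xi)$ in place of $\C^\bullet_\U[n+1]$. The two canonical short exact sequences
$$0\to j^*[-1]\Idot_\U(\xi)\to\psi_f[-1]\Idot_\U(\xi)\xrightarrow{\operatorname{can}}\phi_f[-1]\Idot_\U(\xi)\to0$$
and
$$0\to\phi_f[-1]\Idot_\U(\xi)\xrightarrow{\operatorname{var}}\psi_f[-1]\Idot_\U(\xi)\to j^![1]\Idot_\U(\xi)\to0$$
lie in $\operatorname{Perv}(X)$ since $j^*[-1]\Idot_\U(\xi)$ and $j^![1]\Idot_\U(\xi)$ are perverse by the discussion preceding the theorem, and $\psi_f[-1]\Idot_\U(\xi)$, $\phi_f[-1]\Idot_\U(\xi)$ are perverse because the shifted nearby and vanishing cycles functors preserve perversity. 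Writing $T_f^{\Idot_\U(\xi)}$ for the nearby cycles monodromy on $\psi_f[-1]\Idot_\U(\xi)$ and using $\operatorname{var}\circ\operatorname{can}=\operatorname{id}-T_f^{\Idot_\U(\xi)}$, the formal argument of \propref{prop:easyprop} yields $j^*[-1]\Idot_\U(\xi)\cong\ker\{\operatorname{id}-T_f^{\Idot_\U(\xi)}\}$. It then remains to exhibit an isomorphism $\psi_f[-1]\Idot_\U(\xi)\cong\psi_f[-1]\C^\bullet_\U[n+1]$ of complexes on $X$ under which $T_f^{\Idot_\U(\xi)}$ corresponds to $\xi\cdot T_f$; for then $\ker\{\operatorname{id}-T_f^{\Idot_\U(\xi)}\}=\ker\{\operatorname{id}-\xi T_f\}=\ker\{\xi^{-1}\operatorname{id}-T_f\}$. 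Since nearby cycles depend only on the restriction to $\U\setminus X$ and $i^*\Idot_\U(\xi)\cong\mathcal L_\xi$, this is the content of the standard covering-space description of $\psi_f$: on the canonical cover of $T:=\U\cap f^{-1}(\D^\circ_\delta\setminus\{\0\})$ corresponding to the kernel of $\pi_1(T)\to\pi_1(\D^\circ_\delta\setminus\{\0\})$, the local system $\mathcal L_\xi$ pulls back to the trivial one, so the underlying pushforward agrees with that of the constant sheaf, while the deck transformation (which yields the nearby cycles monodromy) acts on $\mathcal L_\xi$ by the extra factor $\xi$.

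The dual statements then follow from Verdier duality: one has $\vdual\circ j^*[-1]\cong j^![1]\circ\vdual$, $\vdual\Idot_\U(\xi)\cong\Idot_\U(\xi^{-1})$, the self-duality of $\psi_f[-1]\C^\bullet_\U[n+1]$, and $\vdual T_f$ corresponds to $T_f^{-1}$ under this self-duality; after the algebraic rearrangement $\xi\operatorname{id}-T_f^{-1}=-\xi T_f^{-1}(\xi^{-1}\operatorname{id}-T_f)$ (which gives $\coker\{\xi\operatorname{id}-T_f^{-1}\}\cong\coker\{\xi^{-1}\operatorname{id}-T_f\}$) and the relabeling $\xi\leftrightarrow\xi^{-1}$, one extracts the cokernel isomorphisms. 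I expect the main obstacle to be the identification $T_f^{\Idot_\U(\xi)}=\xi T_f$ at the level of complexes on $X$: while intuitively clear from the very definition of $\mathcal L_\xi$ (whose meridian monodromy is $\xi$, contributing multiplicatively to the Milnor monodromy), it must be implemented as an isomorphism of perverse sheaves, not merely of stalks, via the covering-space construction sketched above.
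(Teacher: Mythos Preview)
Your proposal is correct and follows essentially the same approach as the paper: identify $\psi_f[-1]\Idot_\U(\xi)$ with $\psi_f[-1]\C^\bullet_\U[n+1]$ so that the monodromy becomes $\xi\cdot T_f$, then read off $j^*[-1]\Idot_\U(\xi)\cong\ker\{\operatorname{id}-T_f^{\Idot_\U(\xi)}\}$ from the $\operatorname{can}/\operatorname{var}$ sequences. The only cosmetic differences are that the paper obtains $\ker\{\xi^{-1}\operatorname{id}-T_f\}\cong\ker\{\xi^{-1}\operatorname{id}-\widetilde T_f\}$ via the snake lemma for the endomorphism $(\xi^{-1}\operatorname{id}-\operatorname{id},\,\xi^{-1}\operatorname{id}-T_f,\,\xi^{-1}\operatorname{id}-\widetilde T_f)$ of the canonical short exact sequence (rather than your direct inverse-on-eigenspaces argument), and obtains the dual statement by dualizing the argument rather than invoking $\vdual\Idot_\U(\xi)\cong\Idot_\U(\xi^{-1})$.
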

\begin{proof} We prove the first statement, and leave the dual argument to the reader.

As the nearby cycles along $f$ are determined by perturbing $f$ in a radial direction in $\D^\circ_\delta\backslash\{\0\}$, it follows that there is an isomorphism 
$$Q:\psi_f[-1]\C^\bullet_\U[n+1]\rightarrow \psi_f[-1]\Idot_\U(\xi);$$
 moreover, by construction, this isomorphism identifies the monodromy $T_f(\xi)$ on $\psi_f[-1]\Idot_\U(\xi)$ with $\xi\cdot T_f$ on $\psi_f[-1]\C^\bullet_\U[n+1]$, i.e., 
 $$
 \xi\cdot T_f = Q^{-1}\circ T_f(\xi)\circ Q.
 $$
 Therefore, 
 $$\ker\{\operatorname{id}-T_f(\xi)\}\cong \ker\{\operatorname{id}-\xi\cdot T_f\}\cong  \ker\{\xi^{-1}\operatorname{id}-T_f\}.
 $$
 
 \smallskip
 
 Consider again the two nearby-vanishing short exact sequences:
$$
0\rightarrow j^*[-1]\Idot_\U(\xi)\rightarrow\psi_f[-1]\Idot_\U(\xi)\xrightarrow{\operatorname{can}}\phi_f[-1]\Idot_\U(\xi)\rightarrow 0
$$
and
$$
0\rightarrow \phi_f[-1]\Idot_\U(\xi)\xrightarrow{\operatorname{var}}\psi_f[-1]\Idot_\U(\xi)\rightarrow j^![1]\Idot_\U(\xi)\rightarrow 0,
$$

\medskip

\noindent where $\operatorname{var}\circ\operatorname{can}\cong \operatorname{id}-T_f(\xi)$. We immediately conclude that 
$$j^*[-1]\Idot_\U(\xi)\cong \ker\{\operatorname{id}-T_f(\xi)\}\cong  \ker\{\xi^{-1}\operatorname{id}-T_f\}.
$$

Finally, consider the triple of endomorphisms 
$$(\xi^{-1}\operatorname{id}-\operatorname{id},\, \xi^{-1}\operatorname{id}-T_f,\, \xi^{-1}\operatorname{id}-\widetilde T_f)$$
which acts on the short exact sequence
$$
0\rightarrow j^*[-1]\C^\bullet_\U[n+1]\rightarrow\psi_f[-1]\C^\bullet_\U[n+1]\xrightarrow{\operatorname{can}}\phi_f[-1]\C^\bullet_\U[n+1]\rightarrow 0
$$
and commutes with its maps. Then we have the associated long exact sequence

\medskip

\noindent$0\rightarrow\ker\{(\xi^{-1}-1)\operatorname{id}\}\rightarrow \ker\{\xi^{-1}\operatorname{id}- T_f\}\rightarrow\hfill$

\smallskip

\noindent$\hfill \ker\{\xi^{-1}\operatorname{id}- \widetilde T_f\}\rightarrow\coker\{(\xi^{-1}-1)\operatorname{id}\}\rightarrow\cdots .$

\medskip

As  $\ker\{(\xi^{-1}-1))\operatorname{id}\}$ and $\coker\{(\xi^{-1}-1)\operatorname{id})\}$   are zero,  we conclude the final isomorphism of the theorem, that   
$$
\ker\{\xi^{-1}\operatorname{id}- T_f\}\cong \ker\{\xi^{-1}\operatorname{id}-\widetilde T_f\}.
$$               
\end{proof}

\smallskip

\begin{cor}\label{cor:main2} If $\xi\neq 0, 1$, then there a short exact sequence in $\operatorname{Perv}(\U)$
$$
0\rightarrow  j_*\ker\{\xi^{-1}\operatorname{id}-\widetilde T_f\}\rightarrow i_!\mathcal L_\xi\xrightarrow{\omega_\xi} \Idot_\U(\xi)\rightarrow 0,
$$
where $\omega_\xi$ is the canonical surjection.

In particular, for $x\in X$, for all $k$, there are isomorphisms on stalk cohomology
$$
H^k\big(\ker\{\xi^{-1}\operatorname{id}-\widetilde T_f\}\big)_x\cong H^{k-1}\big(\Idot_\U(\xi)\big)_x.
$$
\end{cor}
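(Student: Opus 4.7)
The plan is to extract the short exact sequence from the canonical attaching distinguished triangle
$$i_! i^* \Idot_\U(\xi) \to \Idot_\U(\xi) \to j_* j^* \Idot_\U(\xi) \xrightarrow{[1]}$$
on $\U$, combined with the identifications $i^* \Idot_\U(\xi) \cong \mathcal L_\xi$ (which holds because $\Idot_\U(\xi)$ is the intermediate extension of $\mathcal L_\xi$ and $i$ is open) and $j^*[-1] \Idot_\U(\xi) \cong \ker\{\xi^{-1} \operatorname{id} - \widetilde T_f\}$ from \thmref{thm:main2}. Rotating the triangle to the left, it reads
$$j_* \ker\{\xi^{-1} \operatorname{id} - \widetilde T_f\} \to i_! \mathcal L_\xi \to \Idot_\U(\xi) \xrightarrow{[1]}.$$

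Next I would verify that all three terms are perverse on $\U$: the middle term by the discussion preceding \thmref{thm:main2}, the right-hand term by construction (it is simple in $\operatorname{Perv}(\U)$), and the left-hand term because $j$ is a closed immersion, so $j_*=j_!$ is $t$-exact for the perverse $t$-structure. Since all three vertices of the distinguished triangle lie in the perverse heart, the long exact sequence of perverse cohomology collapses to the stated short exact sequence. The map $i_! \mathcal L_\xi \to \Idot_\U(\xi)$ is the counit of the adjunction $i_! \dashv i^*$ applied to $\Idot_\U(\xi)$, which is precisely the canonical map $\omega_\xi$; its surjectivity is then automatic from the sequence.

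For the stalk cohomology statement, I would restrict the short exact sequence to a point $x \in X$ and pass to the associated long exact sequence on stalk cohomology. Because $x$ does not lie in $\U \setminus X$, the stalks $(i_! \mathcal L_\xi)_x$ vanish in every degree, so the long exact sequence breaks into the desired isomorphisms
$$H^{k-1}\bigl(\Idot_\U(\xi)\bigr)_x \;\cong\; H^k\bigl(j_* \ker\{\xi^{-1} \operatorname{id} - \widetilde T_f\}\bigr)_x \;=\; H^k\bigl(\ker\{\xi^{-1} \operatorname{id} - \widetilde T_f\}\bigr)_x,$$
where the final equality uses that $j$ is a closed immersion and so $j_*$ preserves stalks at points of $X$.

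The only real hazard is bookkeeping: keeping the degree shifts straight when passing between the derived-category $j^*$ and the perverse-theoretic $j^*[-1]$, and ensuring that rotation of the distinguished triangle introduces the $[-1]$ on the correct vertex. There is no substantive obstacle, since the heavy lifting has already been done in \thmref{thm:main2} and in the perversity verification for $i_! \mathcal L_\xi$ given just before that theorem.
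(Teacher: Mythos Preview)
Your proposal is correct and is exactly the argument the paper has set up: the distinguished triangle $j_*j^*[-1]\Idot_\U(\xi)\rightarrow i_!i^!\Idot_\U(\xi)\rightarrow \Idot_\U(\xi)\xrightarrow{[1]}$ and the perversity of each term are written out explicitly just before \thmref{thm:main2}, and combining this with the identification $j^*[-1]\Idot_\U(\xi)\cong\ker\{\xi^{-1}\operatorname{id}-\widetilde T_f\}$ from that theorem is precisely how the corollary follows. Your stalk computation via $(i_!\mathcal L_\xi)_x=0$ for $x\in X$ is likewise the intended reading.
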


\section{Applications}\label{sec:appl}

What we have proved in \thmref{thm:1theorem} and \thmref{thm:main2} is that there is a short exact sequence in $\operatorname{Perv}(X)$
\begin{equation}
0\rightarrow \operatorname{ker}\big\{\operatorname{id}-\widetilde T_f\big\}\rightarrow\Z^\bullet_X[n]\xrightarrow{\tau_{{}_X}}\mathbf I^\bullet_X\rightarrow0,\tag{$*$}
\end{equation}
where $\tau_X$ is the canonical surjection, and, if $\xi\neq 0, 1$, a short exact sequence in $\operatorname{Perv}(\U)$
\begin{equation}
0\rightarrow  j_*\ker\{\xi^{-1}\operatorname{id}-\widetilde T_f\}\rightarrow i_!\mathcal L_\xi\xrightarrow{\omega_\xi} \Idot_\U(\xi)\rightarrow 0,\tag{$**$}
\end{equation}
where $\omega_\xi$ is the canonical surjection.

The reader should naturally ask: what does this tell us about the topology of hypersurface singularities {\bf without} the language of the derived category and $\operatorname{Perv}(X)$?

\smallskip 

It is {\bf not} true, in general, that cohomology of the stalk of the kernel is isomorphic to the kernel of the cohomology on the stalks, i.e., there may exist $x\in \Sigma$ and a degree $k$ such that
$$H^k\big(\operatorname{ker}\big\{\operatorname{id}-\widetilde T_f\big\}\big)_x\not\cong \operatorname{ker}\big\{\operatorname{id}-(\widetilde T_f)^k_x\big\},
$$
where $(\widetilde T_f)^k_x$ is the induced map on $H^k(\phi_f[-1]\Z^\bullet_\U[n+1])_x$. 
This makes the relationship with the topological theory complicated, since it is $(\widetilde T_f)_x$ which is the classical Milnor monodromy on the reduced cohomology of the Milnor fiber of $f$ at $x$. 

\smallskip

However, we have the following known, easy lemma, which we prove for lack of a convenient reference.

\begin{lem}\label{lem:onedegree} Suppose that $G:\Adot\rightarrow\Bdot$ is a morphism of perverse sheaves on an analytic space $Y$. Let $y\in Y$, and let $d:=\dim_yY$. Then, there is an isomorphism
$$
H^{-d}\big(\operatorname{ker} G\big)_y\cong \operatorname{ker}\left\{G^{-d}_y: H^{-d}(\Adot)_y\rightarrow H^{-d}(\Bdot)_y\right\}.
$$
\end{lem}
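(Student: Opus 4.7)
The plan is to factor $G$ through its image in $\operatorname{Perv}(Y)$ and then reduce the statement to a vanishing fact about stalks of perverse sheaves. Writing $G=\iota\circ p$ with $p\colon\Adot\twoheadrightarrow\operatorname{im}G$ the canonical surjection (whose kernel is $\ker G$) and $\iota\colon\operatorname{im}G\hookrightarrow\Bdot$ the canonical injection (whose cokernel is $\operatorname{coker}G$), I get two short exact sequences
$$
0\to\ker G\to\Adot\xrightarrow{p}\operatorname{im}G\to 0,
\qquad
0\to\operatorname{im}G\xrightarrow{\iota}\Bdot\to\operatorname{coker}G\to 0
$$
in $\operatorname{Perv}(Y)$, each of which is a distinguished triangle in the derived category.

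Next, I would apply stalk cohomology at $y$ to obtain two long exact sequences. In degrees around $-d$, the first gives
$$
H^{-d-1}(\operatorname{im}G)_y\to H^{-d}(\ker G)_y\to H^{-d}(\Adot)_y\xrightarrow{p^{-d}_y}H^{-d}(\operatorname{im}G)_y,
$$
and the second contains the segment
$$
H^{-d-1}(\operatorname{coker}G)_y\to H^{-d}(\operatorname{im}G)_y\xrightarrow{\iota^{-d}_y}H^{-d}(\Bdot)_y.
$$
Assuming the two leftmost terms vanish, the first sequence identifies $H^{-d}(\ker G)_y$ with $\ker p^{-d}_y$, and the second shows $\iota^{-d}_y$ is injective on $H^{-d}$. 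Since $G^{-d}_y=\iota^{-d}_y\circ p^{-d}_y$ with $\iota^{-d}_y$ injective, this yields $\ker G^{-d}_y=\ker p^{-d}_y\cong H^{-d}(\ker G)_y$, which is the desired conclusion.

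The main obstacle will be the auxiliary vanishing $H^{-d-1}(\Pdot)_y=0$ for every perverse sheaf $\Pdot$ on $Y$---equivalently, the fact that the stalk $\Pdot_y$ is cohomologically concentrated in degrees $[-\dim_yY,\,0]$. The upper bound is immediate from the support condition. For the lower bound I would select a Whitney stratification of a neighborhood of $y$ to which $\Pdot$ is adapted, observe that every stratum in that neighborhood has complex dimension at most $d=\dim_yY$, and reduce to simple perverse sheaves of the form $\operatorname{IC}_{\overline S}(\mathcal L)$; the stalks of such IC sheaves are known to vanish below degree $-\dim\overline S\geq -d$. A short induction along a local composition series for $\Pdot$ then propagates this vanishing from the simple subquotients to $\Pdot$ itself, completing the proof.
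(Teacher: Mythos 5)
Your proposal is correct and follows essentially the same route as the paper: factor $G = \iota\circ p$ through its image in $\operatorname{Perv}(Y)$, write down the two short exact sequences for $\ker G$ and $\operatorname{coker} G$, pass to stalk cohomology at $y$, and invoke that stalks of perverse sheaves vanish below degree $-\dim_y Y$. The only difference is that the paper simply asserts the auxiliary vanishing fact (``the stalk cohomology at $y$ of all of these perverse sheaves is zero below degree $-d$''), whereas you sketch a justification via a local composition series into $\operatorname{IC}_{\overline S}(\mathcal L)$'s; that sketch is sound with field coefficients, though to cover the $\Z$ case used elsewhere in the paper one would replace the composition-series step with a direct induction over the attaching triangles of a local stratification.
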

\begin{proof} Consider the canonical short exact sequences in $\operatorname{Perv}(Y)$:
$$
0\rightarrow\operatorname{ker}G\rightarrow\Adot\xrightarrow{\ \alpha\ }\operatorname{im}G\rightarrow 0
$$
and
$$
0\rightarrow\operatorname{im}G\xrightarrow{\ \beta\ }\Bdot\rightarrow\operatorname{coker}G\rightarrow 0,$$
where $\beta\circ\alpha=G$.

Since the stalk cohomology at $y$ of all of these perverse sheaves is zero below degree $-d$, the non-zero terms of the associated long exact sequences on stalk cohomology begin as follows:
$$
0\rightarrow H^{-d}(\operatorname{ker}G)_y\rightarrow H^{-d}(\Adot)_y\xrightarrow{\alpha^{-d}_y}H^{-d}(\operatorname{im}G)_y\rightarrow\dots
$$
and
$$
0\rightarrow H^{-d}(\operatorname{im}G)_y\xrightarrow{\beta^{-d}_y} H^{-d}(\Bdot)_y\rightarrow\dots\ .$$

\medskip

The conclusion is immediate, since $\beta_y^{-d}\circ\alpha_y^{-d} = G^{-d}_y$.
\end{proof}

\medskip

\begin{rem} Of course we intend to apply \lemref{lem:onedegree} to the case where $G=\xi^{-1}\operatorname{id}-\widetilde T_f$ (including the case where $\xi$ may be $1$). While $\phi_f[-1]\C_\U[n+1]$ (or with $\Z$ coefficients) is a perverse sheaf on all of $X$, its support is $\Sigma$, and we may (and will) apply \lemref{lem:onedegree} after restricting, without further comment, to $\Sigma$.
\end{rem}

\medskip

\begin{rem} The result of the lemma should be contrasted with our result in Theorem 3.1 of \cite{pervkernel}, where we looked at an endomorphism of perverse sheaves, $T:\Pdot\rightarrow\Pdot$, with a field as the base ring. However, there, we looked at the kernel in degree $-e$, where $e:=\dim\operatorname{supp}(\operatorname{ker})$, which could be strictly less than what we consider in the lemma, which is essentially $\dim\operatorname{supp}\Pdot$.
\end{rem}

\medskip

Below, we will, as in the introduction, use topological indexing for the reduced intersection cohomology $\widetilde{IH}$.

\smallskip

From  \thmref{thm:1theorem} and the lemma, we conclude:

\begin{prop}\label{prop:onedegree} Let $x\in\Sigma$ and let $s:=\dim_x\Sigma$. Let
$$
 H^{n-s}(F_{f, x};\,\Z)\xrightarrow{(\widetilde T_f)^{-s}_x} H^{n-s}(F_{f, x};\,\Z)
$$
be the standard Milnor monodromy on the degree $(n-s)$ cohomology of the Milnor fiber of $f$ at $x$.

Then, there are isomorphisms
$$
\operatorname{ker}\big\{\operatorname{id}- (\widetilde T_f)^{-s}_x\big\}\cong \Z^\omega\cong \widetilde{IH}^{n-s-1}(B_\epsilon^\circ(x)\cap X; \, \Z),
$$
\medskip
\noindent where $0<\epsilon\ll 1$ and
$$
\omega:=\begin{cases}
\phantom{-1+\,\, }\operatorname{rank} H^{n+s}(K_{X, x};\,\Z), &\textnormal{ if } s\neq n-1;\\
-1+\operatorname{rank} H^{n+s}(K_{X, x};\,\Z), &\textnormal{ if } s= n-1.
\end{cases}
$$
\end{prop}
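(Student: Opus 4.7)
The plan is to chain together \lemref{lem:onedegree}, \thmref{thm:1theorem}, and the stalk description of $\Ndot_X$ in \remref{rem:reduced}, then to convert the resulting intersection cohomology rank into an ordinary cohomology rank on the real link $K_{X,x}$. For the first step, I would apply \lemref{lem:onedegree} to $G=\operatorname{id}-\widetilde T_f$, an endomorphism of the perverse sheaf $\phi_f[-1]\Z^\bullet_\U[n+1]$ whose support is $\Sigma$ of local dimension $s$ at $x$; this yields $H^{-s}(\ker\{\operatorname{id}-\widetilde T_f\})_x\cong\ker\{\operatorname{id}-(\widetilde T_f)^{-s}_x\}$. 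Combining this with $\ker\{\operatorname{id}-\widetilde T_f\}\cong\Ndot_X$ from \thmref{thm:1theorem} produces the first isomorphism of the proposition.

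For the second isomorphism, I would invoke \remref{rem:reduced}: setting $k=-s$ in the formula $H^k(\Ndot_X)_x\cong\widetilde{IH}^{n+k-1}(B^\circ_\epsilon(x)\cap X;\Z)$ produces
$$H^{-s}(\Ndot_X)_x\;\cong\;\widetilde{IH}^{n-s-1}(B^\circ_\epsilon(x)\cap X;\Z).$$

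The remaining task is to identify the rank with $\omega$. From the long exact sequence on stalks associated with the defining short exact sequence of $\Ndot_X$ (as in \remref{rem:reduced}), $H^{-s}(\Ndot_X)_x\cong H^{-s-1}(\Idot_X)_x$ when $s\neq n-1$, while for $s=n-1$ one has the short exact sequence $0\to\Z\to H^{-n}(\Idot_X)_x\to H^{-n+1}(\Ndot_X)_x\to 0$, which accounts exactly for the $-1$ correction in $\omega$. It therefore suffices to show $\operatorname{rank} H^{-s-1}(\Idot_X)_x=\operatorname{rank} H^{n+s}(K_{X,x};\Z)$. To this end, $H^{-s-1}(\Idot_X)_x\cong IH^{n-s-1}(B^\circ_\epsilon(x)\cap X;\Z)$, and by the cone formula this agrees with $IH^{n-s-1}(K_{X,x};\Z)$. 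On the compact, oriented, $(2n-1)$-dimensional pseudomanifold $K_{X,x}$, Poincar\'e duality on intersection cohomology gives $\operatorname{rank} IH^{n-s-1}(K_{X,x})=\operatorname{rank} IH^{n+s}(K_{X,x})$; and since the singular locus of $K_{X,x}$ has real codimension at least $2(n-s)$ in $K_{X,x}$, the comparison morphism $IH^{n+s}(K_{X,x};\Z)\to H^{n+s}(K_{X,x};\Z)$ is an isomorphism in this high-degree range.

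The main obstacle is this last step: assembling the cone formula, intersection-cohomology Poincar\'e duality, and the comparison between intersection and ordinary cohomology on the real link, in order to turn an intersection cohomology rank on a ball neighborhood into an ordinary cohomology rank on $K_{X,x}$ with the correct case distinction at $s=n-1$.
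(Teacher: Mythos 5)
Your first two steps match the paper's: apply Lemma~\ref{lem:onedegree} to $G=\operatorname{id}-\widetilde T_f$ (restricted to $\Sigma$) together with Theorem~\ref{thm:1theorem} to identify $\ker\{\operatorname{id}-(\widetilde T_f)^{-s}_x\}$ with $H^{-s}(\Ndot_X)_x$, and then read the second isomorphism and the $s=n-1$ correction off of Remark~\ref{rem:reduced}. Where you diverge is in converting $\operatorname{rank}H^{-s-1}(\Idot_X)_x$ into $\operatorname{rank}H^{n+s}(K_{X,x})$. The paper does this in the derived category: it first passes to $\C$-coefficients (justified because Kato--Matsumoto connectivity makes $H^{n-s}(F_{f,x};\Z)$, hence the kernel, free Abelian), uses self-duality $\mathcal D\Idot_X\cong\Idot_X$ to get $H^{-s-1}(\Idot_X)_x\cong H^{s+1}(j_x^!\Idot_X)$, and then shows $H^{s+1}(j_x^!\Idot_X)\cong H^{s+1}(j_x^!\C^\bullet_X[n])\cong H^{n+s}(K_{X,x};\C)$, using that the costalk cohomology of $\Ndot_X$ vanishes in degrees $>s$ (it is perverse with $s$-dimensional support, so over $\C$ its dual has stalks in degrees $[-s,0]$). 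Your route via the cone formula, Poincar\'e duality on $K_{X,x}$, and a comparison with ordinary cohomology is genuinely different and in principle workable, but the step you flag as ``the main obstacle'' is a real gap: you assert it rather than prove it, and it is not an off-the-shelf fact.

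Three things need to be supplied. First, pass to $\C$-coefficients as the paper does; both PD and the $IH\leftrightarrow H$ comparison are rank statements, and the freeness argument is what licenses this. Second, to use Poincar\'e duality with a \emph{single} middle perversity on the odd-dimensional $K_{X,x}$, you must observe that every stratum of $K_{X,x}$ has even real codimension (they are spherical links of complex strata of $X$, and intersecting with the sphere drops all real dimensions by one), so the lower and upper middle perversities coincide; otherwise PD would relate $IH^{\bar m}$ to $IH^{\bar n}$. Third, and most seriously: the natural comparison map runs $H^k\to IH^k$, and the standard codimension results give an isomorphism in a \emph{low}-degree range, not a high one. To get $H^{n+s}(K_{X,x};\C)\cong IH^{n+s}(K_{X,x};\C)$ you need to bound the hypercohomology of the cone $C$ of the comparison morphism: $C$ is supported on $\Sigma\cap K_{X,x}$, of real dimension $\leq 2s-1$, and at a point in a stratum of real codimension $c\geq 2(n-s)$ its stalk cohomology sits in degrees $[0,\,c/2-1]$, whence $\mathbb H^k(K_{X,x};C)=0$ for $k>n+s-2$, which gives the isomorphism in degrees $n+s-1$ and $n+s$. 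That bound is precisely the topological shadow of the paper's vanishing of $H^k(j_x^!\Ndot_X)$ for $k>s$; so your route replaces the paper's short Verdier-duality argument with an equivalent but more delicate piece of stratified bookkeeping that still has to be carried out.
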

\begin{proof} Consider a portion of the long exact sequence on stalk cohomology which comes from the short exact sequence ($\ast$), and apply \lemref{lem:onedegree}; we obtain:
$$
0\rightarrow H^{n-s-1}(\Z^\bullet_X)_x\rightarrow H^{-s-1}(\Idot)_x\rightarrow \operatorname{ker}\big\{\operatorname{id}- (\widetilde T_f)^{-s}_x\big\}\rightarrow 0.
$$
Note that $H^{n-s-1}(\Z^\bullet_X)_x=0$ unless $s=n-1$.

Therefore, 
$$
\operatorname{ker}\big\{\operatorname{id}- (\widetilde T_f)^{-s}_x\big\}\cong \widetilde{IH}^{n-s-1}(B_\epsilon^\circ(x)\cap X; \, \Z).
$$

Now, the fact that $H^{n-s}(F_{f, x};\,\Z)$ is free Abelian is classical; it follows from the connectivity result of Kato and Matsumoto \cite{katomatsu}, the Hurewicz Theorem,  and the Universal Coefficient Theorem. Thus, $\operatorname{ker}\big\{\operatorname{id}- (\widetilde T_f)^{-s}_x\big\}$ is free Abelian.

Since our remaining claim is just about the value of $\omega$, it suffices for us to work over a base ring that is the field $\C$, rather than $\Z$. With field coefficients, $\Idot_X$ is self-(Verdier) dual, i.e., $\mathcal D\Idot_X\cong\Idot_X$.

Let $j_x$ denote the inclusion of $\{x\}$ into $X$. Then,
$$
H^{-s-1}(\Idot)_x=H^{-s-1}(j_x^*\Idot_X)\cong H^{-s-1}(\mathcal D j_x^!\mathcal D \Idot_x)\cong H^{s+1}(j_x^!\Idot)\cong H^{s+1}(j^!_x\C^\bullet_X[n]),
$$
where the last isomorphism follows by applying $j^!_x$ to the short exact sequence defining $\Ndot_X$ and taking the associated long exact sequence.

Now,
$$
H^{s+1}(j^!_x\C^\bullet_X[n])\cong H^{n+s+1}(B_\epsilon^\circ(x), B_\epsilon^\circ(x)\backslash\{x\};\, \C)\cong
$$
$$
H^{n+s}(B_\epsilon^\circ(x)\backslash\{x\};\, \C)\cong H^{n+s}(K_{X, x};\, \C).
$$
This completes the proof.
\end{proof} 

\medskip

\begin{rem} We should point out that one does not need \thmref{thm:1theorem} to prove the ordinary cohomology statement in \propref{prop:onedegree}. Milnor's work in Section 8 of \cite{milnorsing} tells us how the homology/cohomology of the complement of the real link $K_{X, x}$ inside $S_\epsilon^{2n+1}$ relates to the kernel of $\operatorname{id}-(\widetilde T_f)^{-s}_x$. Then, using Alexander Duality, one can recover  the first isomorphism in \propref{prop:onedegree}.

Also, related to the case where $s=n-1$, it is  well-known that the rank of $H^{2n-1}(K_{X, x};\,\Z)$ is equal to the rank of $IH^0(B_\epsilon^\circ(x)\cap X; \, \Z)$, which is equal to the number of irreducible components of $X$ at $x$.
\end{rem}

\bigskip

The proposition with $\xi\neq0, 1$ which corresponds to \propref{prop:onedegree}  is essentially impossible to state without at least using intersection cohomology with twisted coefficients. Still, we find it interesting that one immediately concludes from \corref{cor:main2} and \lemref{lem:onedegree} that:

\begin{prop}  Let $x\in\Sigma$ and let $s:=\dim_x\Sigma$. Let
$$
 H^{n-s}(F_{f, x};\,\C)\xrightarrow{(\widetilde T_f)^{-s}_x} H^{n-s}(F_{f, x};\,\C)
$$
be the standard Milnor monodromy on the degree $(n-s)$ cohomology of the Milnor fiber of $f$ at $x$, and let $\xi\neq 0, 1$.

Then, there is an isomorphism
$$
\operatorname{ker}\big\{\xi^{-1}\operatorname{id}- (\widetilde T_f)^{-s}_x\big\}\cong H^{-s-1}\big(\Idot_\U(\xi)\big)_x.
$$

\end{prop}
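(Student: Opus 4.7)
The strategy is to mimic the proof of \propref{prop:onedegree}, substituting \corref{cor:main2} for the $\xi=1$ short exact sequence $(*)$ used there. The entire argument becomes an orchestration of \lemref{lem:onedegree} with \corref{cor:main2}, once we take care of the passage from $X$ to $\Sigma$.

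First, I apply \lemref{lem:onedegree} to the morphism of perverse sheaves
$$
G:=\xi^{-1}\operatorname{id}-\widetilde T_f\colon\phi_f[-1]\C^\bullet_\U[n+1]\rightarrow\phi_f[-1]\C^\bullet_\U[n+1].
$$
As observed in the remark preceding \propref{prop:onedegree}, the support of $\phi_f[-1]\C^\bullet_\U[n+1]$ is $\Sigma$, so $G$ may be regarded as an endomorphism of perverse sheaves on $\Sigma$; thus $d=\dim_x\Sigma=s$, and the lemma gives
$$
H^{-s}\bigl(\ker\{\xi^{-1}\operatorname{id}-\widetilde T_f\}\bigr)_x\cong\ker\bigl\{G^{-s}_x\bigr\}.
$$

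Second, I identify $G^{-s}_x$ explicitly. The standard stalk formula for vanishing cycles gives $H^{-s}(\phi_f[-1]\C^\bullet_\U[n+1])_x\cong\widetilde H^{n-s}(F_{f,x};\,\C)$, and since $n-s\geq 1$ this agrees with the unreduced $H^{n-s}(F_{f,x};\,\C)$. Under this identification, $G^{-s}_x$ is exactly $\xi^{-1}\operatorname{id}-(\widetilde T_f)^{-s}_x$, so the previous display reads
$$
H^{-s}\bigl(\ker\{\xi^{-1}\operatorname{id}-\widetilde T_f\}\bigr)_x\cong\ker\bigl\{\xi^{-1}\operatorname{id}-(\widetilde T_f)^{-s}_x\bigr\}.
$$
Third, I invoke \corref{cor:main2} with $k=-s$, which supplies
$$
H^{-s}\bigl(\ker\{\xi^{-1}\operatorname{id}-\widetilde T_f\}\bigr)_x\cong H^{-s-1}\bigl(\Idot_\U(\xi)\bigr)_x.
$$
Concatenating the last two isomorphisms yields the claim.

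There is essentially no technical obstacle: the work has already been done in Theorems~\ref{thm:1theorem} and \ref{thm:main2} and in the supporting lemma. The only bookkeeping point, just as in \propref{prop:onedegree}, is the passage to $\Sigma$ so that \lemref{lem:onedegree} sees the correct ambient dimension $s$ rather than $n$; this is routine because perverse sheaves on $\U$ (or $X$) supported in $\Sigma$ correspond under $m_*=m_!$ to perverse sheaves on $\Sigma$, and this correspondence preserves stalks and stalk-level maps at points $x\in\Sigma$.
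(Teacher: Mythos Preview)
Your proof is correct and is exactly the argument the paper intends: the paper states only that the result ``follows immediately from \corref{cor:main2} and \lemref{lem:onedegree},'' and you have simply spelled out those two steps (restricting to $\Sigma$ so that $d=s$ in the lemma, then invoking the stalk isomorphism from \corref{cor:main2} with $k=-s$).
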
 

\bigskip

Another application of the short exact sequence ($*$) is that, for each $x\in X$, we can apply the vanishing cycle functor, $\phi_L[-1]$, which is exact on $\operatorname{Perv}(X)$, where $L$ is the restriction to $X$ of a generic affine linear form such that $L(x)=0$. We then obtain a short exact sequence of $\Z$-modules:
\begin{equation}
0\rightarrow \operatorname{ker}\big\{\operatorname{id}-(\phi_L[-1]\widetilde T_f)^0_x\big\}\rightarrow H^0\big(\phi_L[-1]\Z^\bullet_X[n]\big)_x\rightarrow H^0\big(\phi_L[-1]\mathbf I^\bullet_X\big)_x\rightarrow0,\tag{$\Diamond$}
\end{equation}
where $(\phi_L[-1]\widetilde T_f)^0_x$ is the automorphism induced by the $f$-monodromy on $$H^0\big(\phi_L[-1]\phi_f[-1]\Z^\bullet_\U[n+1]\big)_x\cong\Z^{\lambda^0_{f, L}(x)},$$
where $\lambda^0_{f, L}(x)$ is the $0$-th L\^e number of $f$ with respect to $L$ at $x$ (see \cite{lecycles}).

Now, $\operatorname{rank}H^0\big(\phi_L[-1]\mathbf I^\bullet_X\big)_x$ is the coefficient of $\{x\}$ in the characteristic cycle of intersection cohomology; this is a great importance in some settings (see, for instance, \cite{bradenpams} for a discussion). (There are different shifting/sign conventions on the characteristic cycle; we are using a convention that, for a perverse sheaf, gives us that all of the coefficients are non-negative.) However, without the language of the derived category and using topological indexing, $H^0\big(\phi_L[-1]\mathbf I^\bullet_X\big)_x$ is isomorphic to
$$
\operatorname{coker}\left\{\widetilde{IH}^{n-1}(B^\circ_\epsilon(x)\cap X;\, \Z)\xrightarrow{r_{{}_{X, x}}}\widetilde{IH}^{n-1}(B^\circ_\epsilon(x)\cap X\cap L^{-1}(\gamma);\, \Z)\right\},
$$
where $r_{{}_{X, x}}$ is induced by the restriction and, as before, $L$ is a generic affine linear form such that $L(x)=0$ and $0<|\gamma|\ll \epsilon\ll 1$.

Furthermore, as the result of L\^e in  \cite{levan} tells us that the complex link $\cL_{X,x}$ of $X$ at $x$ has the homotopy-type of a bouquet of $(n-1)$-spheres, the number of spheres in this homotopy-type is precisely $\operatorname{rank} H^0\big(\phi_L[-1]\Z^\bullet_X[n]\big)_x$, which is known to equal  the intersection number $\big(\Gamma_{f, L}^1\cdot V(L)\big)_x$, where $\Gamma_{f, L}^1$ is the relative polar curve of $f$ with respect to $L$; see Corollary 2.6 of \cite{gencalcvan} (though this was known earlier by Hamm, L\^e, Siersma, and Teissier). 

Now, without the language of the derived category,  $\operatorname{ker}\big\{\operatorname{id}-\phi_L[-1](\widetilde T^0_f)_x\big\}$ would be the kernel of an endomorphism on the cohomology of a pair of pairs of spaces; this, again, is a complicated object. However, the dual argument to that appearing near the end of the proof of \thmref{thm:1theorem} tells us that
$$
{}^{\mu}\hskip -0.02in H^0(m_*m^\ast\psi_f[-1]\Z^\bullet_\U[n+1])\xrightarrow{\operatorname{ {}^{\mu}\hskip -0.02in H^0m_*m^\ast\operatorname{can}}}\phi_f[-1]\Z^\bullet_\U[n+1]
$$
is an isomorphism.

As $\phi_L[-1]$ is $t$-exact (and as $L$ is generic), we have 
$$
\phi_L[-1]\phi_f[-1]\Z^\bullet_\U[n+1]\cong \phi_L[-1]{}^{\mu}\hskip -0.02in H^0(m_*m^*\psi_f[-1]\Z^\bullet_\U[n+1])\cong
$$
$$
{}^{\mu}\hskip -0.02in H^0\big(\phi_L[-1](m_*m^*\psi_f[-1]\Z^\bullet_\U[n+1])\big),
$$
which are perverse sheaves with $x$ as an isolated point in their support. 

Thus, 
$$
H^0\big(\phi_L[-1]\phi_f[-1]\Z^\bullet_\U[n+1]\big)_x\cong H^0\big(\phi_L[-1](m_*m^*\psi_f[-1]\Z^\bullet_\U[n+1])\big)_x,
$$
by an isomorphism which takes $\operatorname{id}-(\phi_L[-1]\widetilde T_f)^0_x$ to $\operatorname{id}-(\phi_L[-1]m_*m^*T_f)^0_x$.

\medskip

Finally, we conclude that the short exact sequence ($\Diamond$) tells us:

\begin{cor}\label{cor:vanvan} There is an equality
$$
\operatorname{rank}\operatorname{coker}\{r_{{}_{X, x}}\}=\big(\Gamma_{f, L}^1\cdot V(L)\big)_x-\operatorname{rank}\operatorname{ker}\big\{\operatorname{id}-\widehat T_{f, L}\big\},
$$
where  
$$
r_{{}_{X, x}}: \widetilde{IH}^{n-1}(B^\circ_\epsilon(x)\cap X;\, \Z)\rightarrow\widetilde{IH}^{n-1}(B^\circ_\epsilon(x)\cap X\cap L^{-1}(\gamma);\, \Z),
$$
is induced by restriction, and where $\widehat T_{f, L}$ is the automorphism induced by the Milnor monodromy of $f$ on the relative cohomology
$$
H^n\big(B^\circ_\epsilon(x)\cap f^{-1}(\eta), \,D^\circ_\delta\cap f^{-1}(\eta); \,\Z\big),
$$
where $0<|\eta|\ll \delta\ll |\gamma|\ll\epsilon\ll 1$ and $D^\circ_\delta$ is the union of all the open balls of radius $\delta$ centered at each of the points in $B^\circ_\epsilon(x)\cap \Sigma\cap L^{-1}(\gamma)$.
\end{cor}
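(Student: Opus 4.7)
The plan is to extract the corollary from the short exact sequence $(\Diamond)$ by taking ranks and identifying each term topologically; the discussion preceding the corollary has essentially handled two of the three identifications, leaving one genuine geometric step. Additivity of ranks in $(\Diamond)$ gives
$$
\operatorname{rank} H^0(\phi_L[-1]\Z^\bullet_X[n])_x \,=\, \operatorname{rank}\operatorname{ker}\{\operatorname{id}-(\phi_L[-1]\widetilde T_f)^0_x\} + \operatorname{rank} H^0(\phi_L[-1]\Idot_X)_x,
$$
in which the left side equals $(\Gamma^1_{f,L}\cdot V(L))_x$ and the last summand on the right equals $\operatorname{rank}\operatorname{coker}\{r_{X,x}\}$, both by the identifications already made above. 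So the corollary reduces to showing $\operatorname{rank}\operatorname{ker}\{\operatorname{id}-(\phi_L[-1]\widetilde T_f)^0_x\} = \operatorname{rank}\operatorname{ker}\{\operatorname{id}-\widehat T_{f,L}\}$. Via the $T_f$-equivariant stalk isomorphism $H^0(\phi_L[-1]\phi_f[-1]\Z^\bullet_\U[n+1])_x\cong H^0(\phi_L[-1]m_*m^*\psi_f[-1]\Z^\bullet_\U[n+1])_x$ already set up in the text, this in turn reduces to identifying the latter stalk — with its induced $T_f$-action — with the relative cohomology $H^n(B^\circ_\epsilon(x)\cap f^{-1}(\eta),\, D^\circ_\delta\cap f^{-1}(\eta);\Z)$ carrying $\widehat T_{f,L}$.

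Set $\mathcal F := m_*m^*\psi_f[-1]\Z^\bullet_\U[n+1]$. Its support is $\Sigma$, and for $y\in\Sigma$ its stalk cohomology is $H^k(\mathcal F)_y\cong H^{k+n}(F_{f,y};\Z)$. To compute $H^0(\phi_L[-1]\mathcal F)_x$, I would use the distinguished triangle $i^*[-1]\mathcal F\to\psi_L[-1]\mathcal F\to\phi_L[-1]\mathcal F\xrightarrow{[1]}$, where $i:L^{-1}(0)\hookrightarrow\U$. The stalk $H^k(\psi_L[-1]\mathcal F)_x$ is $\hyp^{k-1}(M_L(x);\mathcal F)$ over the Milnor fiber $M_L(x) := B^\circ_\epsilon(x)\cap L^{-1}(\gamma)$ of $L$ at $x$; since $\mathcal F|_{M_L(x)}$ is supported on the finite set $M_L(x)\cap\Sigma = \{p_i\}$, this hypercohomology splits as $\bigoplus_i H^{k-1+n}(F_{f,p_i};\Z)$. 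Because $\eta\ll\delta\ll\gamma$, the disjoint union of the $F_{f,p_i}$ is homotopy-equivalent to $D^\circ_\delta\cap f^{-1}(\eta)$, identifying $\bigoplus_i H^*(F_{f,p_i};\Z)\cong H^*(D^\circ_\delta\cap f^{-1}(\eta);\Z)$. Since $\phi_L[-1]\mathcal F$ is perverse with $x$ as an isolated point in its support (for generic $L$), its stalk cohomology at $x$ is concentrated in degree zero, and the resulting LES in stalk cohomology collapses to
$$
0\to H^{n-1}(F_{f,x};\Z)\to H^{n-1}(D^\circ_\delta\cap f^{-1}(\eta);\Z)\to H^0(\phi_L[-1]\mathcal F)_x\to H^n(F_{f,x};\Z)\to H^n(D^\circ_\delta\cap f^{-1}(\eta);\Z)\to 0.
$$
This is precisely the long exact sequence of the pair $(B^\circ_\epsilon(x)\cap f^{-1}(\eta),\,D^\circ_\delta\cap f^{-1}(\eta))$, whose only nontrivial relative cohomology sits in degree $n$ by stratified Morse theory applied to the generic $L$ (the pair has the homotopy type of $(\Gamma^1_{f,L}\cdot V(L))_x$ attached $n$-cells); a five-lemma comparison identifies $H^0(\phi_L[-1]\mathcal F)_x\cong H^n(B^\circ_\epsilon(x)\cap f^{-1}(\eta),\,D^\circ_\delta\cap f^{-1}(\eta);\Z)$.

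The main obstacle is matching the monodromies: one must verify that under this chain of identifications, the sheaf-theoretic action induced by $T_f$ on $H^0(\phi_L[-1]\mathcal F)_x$ corresponds to the classical Milnor monodromy $\widehat T_{f,L}$ on the relative cohomology of the pair. This is a naturality argument: $T_f$ acts on $\psi_f[-1]\Z^\bullet_\U[n+1]$ and its action commutes with $m_*m^*$ and with the $t$-exact functor $\phi_L[-1]$; each stalk identification $H^*(\mathcal F)_{p_i}\cong H^{*+n}(F_{f,p_i};\Z)$ is $T_f$-equivariant because it comes from the geometric Milnor fibration of $f$ near $p_i$; and the connecting homomorphism of the sheaf-theoretic LES agrees (by standard base change) with that of the pair. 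Stringing these equivariances together identifies the two kernels, delivering the stated rank equality.
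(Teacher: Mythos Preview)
Your proposal is correct and follows essentially the same approach as the paper: the paper's proof is the discussion immediately preceding the corollary statement, which applies $\phi_L[-1]$ to the short exact sequence $(*)$, identifies the middle and right terms of $(\Diamond)$ with $(\Gamma^1_{f,L}\cdot V(L))_x$ and $\operatorname{coker}\{r_{X,x}\}$, and then passes from $\phi_f[-1]$ to $m_*m^*\psi_f[-1]$ via the ${}^\mu H^0 m_*m^*\operatorname{can}$ isomorphism exactly as you do. The paper leaves the final identification of $H^0(\phi_L[-1]m_*m^*\psi_f[-1]\Z^\bullet_\U[n+1])_x$ with the relative cohomology carrying $\widehat T_{f,L}$ entirely implicit, so your explicit LES/five-lemma computation of that step is additional detail rather than a different method.
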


\smallskip

In the case where $s=0$, \corref{cor:vanvan} reduces to \propref{prop:easycool}. The case where $s=1$ simplifies enough that it is worth stating separately.

\begin{cor} Suppose that $\dim_x\Sigma=1$ and that $\dim_x\Sigma(f_{|_{V(L)}})=0$. Then, there is an equality
$$
\operatorname{rank}\operatorname{coker}\{r_{{}_{X, x}}\}=\big(\Gamma_{f, L}^1\cdot V(L)\big)_x-\operatorname{rank}\operatorname{ker}\big\{\operatorname{id}-\widehat T_{f, L}\big\},
$$
where  
$$
r_{{}_{X, x}}: {IH}^{n-1}(B^\circ_\epsilon(x)\cap X;\, \Z)\rightarrow{IH}^{n-1}(B^\circ_\epsilon(x)\cap X\cap L^{-1}(\gamma);\, \Z),
$$
is induced by restriction, and where $\widehat T_{f, L}$ is the automorphism induced by the Milnor monodromy of $f$ on the relative cohomology
$$
H^n\big(F_{f, x}, \,\dot{\cup} F_{f, x_i}; \,\Z\big),
$$
where the union is over $x_i\in B^\circ_\epsilon(x)\cap \Sigma\cap L^{-1}(\gamma)$, where $0< |\gamma|\ll\epsilon\ll 1$, and $F_{f, x}$ and $F_{f, x_i}$ denote the Milnor fibers of $f$ at the respective points.

\end{cor}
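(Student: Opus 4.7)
The plan is to derive this corollary as a direct specialization of \corref{cor:vanvan} to the case $s=1$, simplifying two auxiliary objects using the dimensional hypotheses. Since $X$ is a reduced hypersurface of dimension $n$, its singular locus $\Sigma$ has dimension at most $n-1$; thus the assumption $\dim_x\Sigma=1$ forces $n\geq 2$. The hypothesis $\dim_x\Sigma(f_{|_{V(L)}})=0$ is precisely the condition that $L$ is ``prepolar'' for $f$ at $x$, which is what is meant by $L$ being a \emph{generic} affine linear form in the statement of \corref{cor:vanvan}. So the arithmetical identity to prove is already in hand; what remains is to identify the terms appearing in it with the simpler objects appearing in the current statement.

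First I would justify replacing $\widetilde{IH}^{n-1}$ by $IH^{n-1}$ in both the source and target of $r_{{}_{X,x}}$. The reduced and unreduced intersection cohomology groups of a connected analytic set agree outside of degree $0$; concretely, $\widetilde{IH}^k\cong IH^k$ for $k\geq 1$. Since $n\geq 2$, we have $n-1\geq 1$, so no reduction is necessary on $B^\circ_\epsilon(x)\cap X$. For the slice $B^\circ_\epsilon(x)\cap X\cap L^{-1}(\gamma)$, the same remark applies, as this is a local complete intersection of dimension $n-1\geq 1$; its intersection cohomology in degree $n-1\geq 1$ also coincides with the reduced version. Thus the map $r_{{}_{X,x}}$ of the previous corollary and the map in the current statement have isomorphic cokernels.

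Next I would identify $D^\circ_\delta\cap f^{-1}(\eta)$ with the disjoint union of Milnor fibers $\dot\cup F_{f, x_i}$. Since $\dim_x\Sigma=1$ and $\gamma$ is chosen sufficiently small and generic, $B^\circ_\epsilon(x)\cap\Sigma\cap L^{-1}(\gamma)$ is a finite set of points $\{x_i\}$, each lying on a smooth branch of $\Sigma$ through $x$. For $\delta\ll|\gamma|$, the open balls $B^\circ_\delta(x_i)$ are disjoint Milnor balls for $f$ at each $x_i$; then for $0<|\eta|\ll\delta$, the intersection $B^\circ_\delta(x_i)\cap f^{-1}(\eta)$ is precisely the Milnor fiber $F_{f, x_i}$. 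Summing over the finitely many $x_i$ gives $D^\circ_\delta\cap f^{-1}(\eta)\ \cong\ \dot\cup_i F_{f, x_i}$, compatibly with the $f$-monodromy. Consequently the endomorphism $\widehat T_{f, L}$ in \corref{cor:vanvan} is identified (via the long exact sequence of the pair) with the Milnor monodromy acting on $H^n(F_{f, x},\,\dot\cup_i F_{f, x_i};\,\Z)$, which matches the $\widehat T_{f, L}$ of the current statement.

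The main obstacle is really just the bookkeeping in the third step: one must verify that the nested choices of radii $0<|\eta|\ll\delta\ll|\gamma|\ll\epsilon\ll 1$ are compatible with (i) each $B^\circ_\delta(x_i)$ being a legitimate Milnor ball for $f$ at $x_i$, (ii) these balls being pairwise disjoint, and (iii) the local triviality of $f$ over the punctured disk being respected, so that the $f$-monodromy on $D^\circ_\delta\cap f^{-1}(\eta)$ really does decompose as the disjoint union of the local Milnor monodromies at each $x_i$. All of these are standard consequences of Thom's isotopy lemma applied to a Whitney stratification of $(\U, X, \Sigma, \{x\})$ refined so that $V(L)$ is transverse to all strata away from $x$. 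Once these identifications are in place, substituting into the equality of \corref{cor:vanvan} gives exactly the stated formula.
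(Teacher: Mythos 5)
Your proposal is correct and matches the paper's intent: the paper presents this corollary as the $s=1$ specialization of \corref{cor:vanvan}, and your three steps (dropping the reduction in degree $n-1\geq 1$, identifying $D^\circ_\delta\cap f^{-1}(\eta)$ with $\dot\cup_i F_{f,x_i}$, and noting that $\dim_x\Sigma(f_{|_{V(L)}})=0$ is exactly the genericity condition for $L$) are precisely the simplifications being invoked. Nothing further is needed.
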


\bigskip

Now, we want to address the $\xi\neq 0, 1$ case. For each $x\in X$, we can also apply to the short exact sequence $(**)$ the vanishing cycle functor, $\phi_L[-1]$, where $L$ is a generic affine linear form on $\U$ such that $L(x)=0$.  We then obtain a short exact sequence of $\C$-vector spaces:
$$
0\rightarrow \operatorname{ker}\big\{\xi^{-1}\operatorname{id}-(\phi_L[-1]\widetilde T_f)^0_x\big\}\rightarrow H^0\big(\phi_L[-1]i_!\mathcal L_\xi\big)_x\rightarrow H^0\big(\phi_L[-1] \Idot_\U(\xi)\big)_x\rightarrow0,
$$
where $(\phi_L[-1]\widetilde T_f)^0_x$ is the automorphism induced by the $f$-monodromy on $$H^0\big(\phi_L[-1]\phi_f[-1]\C^\bullet_\U[n+1]\big)_x \cong\C^{\lambda^0_{f, L}(x)}.$$

Now, it follows from Theorem 4.2.B of \cite{hypercohom} that 
$$\dim H^0\big(\phi_L[-1]i_!\mathcal L_\xi\big)_x= \big(\Gamma_{f, L}^1\cdot V(L)\big)_x.
$$

Thus, we obtain:
\begin{cor}\label{cor:xi} There is an equality 
$$
\big(\Gamma_{f, L}^1\cdot V(L)\big)_x-\dim \operatorname{ker}\big\{\xi^{-1}\operatorname{id}-(\phi_L[-1]\widetilde T_f)^0_x\big\}= \dim H^0\big(\phi_L[-1] \Idot_\U(\xi)\big)_x\geq 0.
$$

\smallskip

In particular, if $x$ is an isolated singular point of $X$, then
$$
\big(\Gamma_{f, L}^1\cdot V(L)\big)_x-\dim \operatorname{ker}\big\{\xi^{-1}\operatorname{id}-(\widetilde T_f)^0_x\big\}= \dim H^0\big(\phi_L[-1] \Idot_\U(\xi)\big)_x\geq 0.
$$
\end{cor}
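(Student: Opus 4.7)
The plan is to apply the $t$-exact functor $\phi_L[-1]$ to the short exact sequence $(**)$ of perverse sheaves on $\U$ given by \corref{cor:main2}. By $t$-exactness this produces a short exact sequence of perverse sheaves supported on $V(L)$; by choosing $L$ generic with respect to the (finitely many) strata appearing in each of the three terms of $(**)$, we may arrange that $x$ is an isolated point in the support of each resulting perverse sheaf. Each term then has cohomology concentrated in degree $0$ at $x$, so passing to the degree-$0$ stalk at $x$ preserves exactness and yields a short exact sequence of finite-dimensional $\C$-vector spaces.

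The first step is to identify the leftmost term of that stalk sequence with $\operatorname{ker}\big\{\xi^{-1}\operatorname{id}-(\phi_L[-1]\widetilde T_f)^0_x\big\}$. This follows from the exactness of $\phi_L[-1]$ on $\operatorname{Perv}(\U)$, which gives that $\phi_L[-1]$ of the kernel of $\xi^{-1}\operatorname{id}-\widetilde T_f$ is the kernel of $\xi^{-1}\operatorname{id}-\phi_L[-1]\widetilde T_f$, together with \lemref{lem:onedegree} applied at $x$ with $d=0$. The second step is to identify the dimension of the middle term with the intersection multiplicity: this is precisely Theorem 4.2.B of \cite{hypercohom} applied to the twisted local system $\mathcal L_\xi$. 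The asserted equality, together with the non-negativity of $\dim H^0(\phi_L[-1]\Idot_\U(\xi))_x$, then drops out of the short exact sequence of stalks.

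For the ``in particular'' statement, when $x$ is an isolated singular point of $X$ the perverse sheaf $\phi_f[-1]\C^\bullet_\U[n+1]$ already has $x$ as an isolated point in its support, hence is a skyscraper there. In that situation $\psi_L[-1]$ vanishes near $x$, so $\phi_L[-1]$ has no effect on the stalk at $x$, and the induced automorphism $(\phi_L[-1]\widetilde T_f)^0_x$ coincides with the classical Milnor monodromy $(\widetilde T_f)^0_x$ acting on $H^n(F_{f,x};\,\C)$. The only slightly delicate point of the proof is ensuring that $L$ can be chosen simultaneously generic with respect to all three perverse sheaves in $(**)$, but this is routine since each has a characteristic cycle with finitely many components at $x$ and a generic $L$ avoids the associated non-conormal directions.
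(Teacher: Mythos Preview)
Your proof is correct and follows essentially the same approach as the paper: apply the exact functor $\phi_L[-1]$ to the short exact sequence $(**)$, pass to degree-$0$ stalk cohomology at $x$, and identify the dimension of the middle term via Theorem~4.2.B of \cite{hypercohom}. You supply more detail than the paper does---in particular for the identification of the leftmost term (via exactness of $\phi_L[-1]$ together with \lemref{lem:onedegree}) and for the ``in particular'' reduction when $x$ is isolated in $\Sigma$---but the argument is the same.
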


\medskip

\begin{rem} If $x$ is an isolated singular point of $X$, then \corref{cor:xi} and \corref{cor:vanvan} combine to tell us the curious fact that $\big(\Gamma_{f, L}^1\cdot V(L)\big)_x$ is an upper-bound for the dimension/rank of all of the eigenspaces of the Milnor monodromy in the one non-trivial degree, degree $n$.
\end{rem}

\bigskip

As a final application of \thmref{thm:1theorem}, we now consider the setting of \cite{heplermassparam} and  \cite{hepler}, in which $X=V(f)$ has a smooth normalization $M$. In this case, the normalization map $F:M\rightarrow X$ is a parameterization of $X$, a finite, surjective, analytic map which is an analytic isomorphism over $X\backslash\Sigma$. This necessarily requires that $\Sigma$ is purely of codimension 1 inside of $X$ (this vacuously includes the case where $\Sigma=\emptyset$). It follows from the support and cosupport characterization of intersection cohomology that $\Idot_X\cong F_*\Z^\bullet_M[n]$. For all $x\in X$, we let $m(x):=|F^{-1}(x)|$ be the number of points in the fiber over $x$ ({\bf not} counted with any sort of algebraic multiplicity).

Via this isomorphism, on the stalk cohomology at a point $x\in X$, the canonical map $\tau_{{}_X}:\Z^\bullet_X[n]\rightarrow\Idot_X$ induces the diagonal map in the only non-zero degree:
$$
\tau_{{}_{X, x}}: H^{-n}(\Z^\bullet_X[n])_x\cong \Z\rightarrow \Z^{m(x)}\cong \bigoplus_{y\in F^{-1}(x)}H^{-n}(\Z^\bullet_M[n])_y.
$$
It follows that the stalk cohomology of $\Ndot_X$ is given by
$$
H^k(\Ndot_X)_x\cong\begin{cases} \Z^{m(x)-1}, &\textnormal{ if } k=-n+1;\\
0, &\textnormal{ if } k\neq -n+1.
\end{cases}
$$
In this context, we have defined $\Ndot_X$ to be the {\it multiple-point complex} of $F$.

\bigskip

Using our results and notation above, we quickly conclude:

\begin{cor} Suppose that $X=V(f)$ is $2$-dimensional with a smooth normalization, and let $x\in\Sigma$, so that $\dim_x\Sigma=1$.

Let
$$
 H^{1}(F_{f, x};\,\Z)\xrightarrow{(T_f)^{-1}_x} H^{1}(F_{f, x};\,\Z)
$$
be the Milnor monodromy on the cohomology of the Milnor fiber of $f$ at $x$.

Then, $m(x)=\operatorname{rank} H^{3}(K_{X, x};\,\Z)$, which equals the number of irreducible components of $X$ at $x$, and there are isomorphisms
$$
\operatorname{ker}\big\{\operatorname{id}- (\widetilde T_f)^{-1}_x\big\}\cong \Z^\omega\cong \widetilde{IH}^{0}(B_\epsilon^\circ(x)\cap X; \, \Z),
$$
where $0<\epsilon\ll 1$ and $\omega=m(x)-1$.

Furthermore, suppose that $\dim_x\Sigma(f_{|_{V(L)}})=0$. Then, there is an equality
$$
\operatorname{rank}\operatorname{ker}\big\{\operatorname{id}-\widehat T_{f, L}\big\}=\big(\Gamma_{f, L}^1\cdot V(L)\big)_x-\operatorname{rank}\operatorname{coker}\{r_{{}_{X, x}}\}= -m(x)+\sum_im(x_i),
$$
where $\widehat T_{f, L}$ is the automorphism induced by the Milnor monodromy of $f$ on the relative cohomology
$$
H^2\big(F_{f, x}, \,\dot{\cup_i} F_{f, x_i}; \,\Z\big),
$$
 the summation and union are over $x_i\in B^\circ_\epsilon(x)\cap \Sigma\cap L^{-1}(\gamma)$,  $0< |\gamma|\ll\epsilon\ll 1$, and $F_{f, x}$ and $F_{f, x_i}$ denote the Milnor fibers of $f$ at the respective points and, finally, 
$$
r_{{}_{X, x}}: {IH}^{1}(B^\circ_\epsilon(x)\cap X;\, \Z)\rightarrow{IH}^{1}(B^\circ_\epsilon(x)\cap X\cap L^{-1}(\gamma);\, \Z),
$$
is induced by restriction.

\end{cor}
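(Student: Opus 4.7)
The plan is to specialize \propref{prop:onedegree} and \corref{cor:vanvan} to $n=2$ and $s=\dim_x\Sigma=1$, and then exploit the parameterization $\Idot_X\cong F_*\Z^\bullet_M[2]$ to rewrite the resulting ranks in terms of the branch counts $m(x)$ and $m(x_i)$. Everything else in the corollary is a mechanical translation of the two earlier results.

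For the first block, I apply \propref{prop:onedegree} in the case $s=n-1$, where $\omega=-1+\operatorname{rank} H^{n+s}(K_{X,x};\Z)=-1+\operatorname{rank} H^3(K_{X,x};\Z)$. The remark following \propref{prop:onedegree} identifies $\operatorname{rank} H^{2n-1}(K_{X,x};\Z)$ with the number of irreducible components of $X$ at $x$. Since $F$ is a finite analytic isomorphism over $X\setminus\Sigma$, each point in $F^{-1}(x)$ corresponds to a local analytic branch of $X$ at $x$, so this count equals $|F^{-1}(x)|=m(x)$. Thus $\omega=m(x)-1$, and the displayed chain of isomorphisms is then immediate from \propref{prop:onedegree} (which builds in the identification with $\widetilde{IH}^{n-s-1}=\widetilde{IH}^0$).

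For the ``furthermore'' block, the first equality is just \corref{cor:vanvan} specialized to $s=1$. For the final equality, my strategy is an Euler characteristic comparison on the complex link. Because $F$ is finite, hence proper, $F_*$ commutes with $\phi_L[-1]$; combined with $\Idot_X\cong F_*\Z^\bullet_M[2]$ this yields
$$
\operatorname{rank}\operatorname{coker}\{r_{X,x}\}=\operatorname{rank} H^0\big(\phi_L[-1]\Idot_X\big)_x=\sum_{y\in F^{-1}(x)}\dim\widetilde H^{1}\big(F_{\ell,y}\big)=\sum_{y\in F^{-1}(x)}\mu_y(\ell),
$$
where $\ell:=L\circ F$ and $F_{\ell,y}$ is its Milnor fiber at $y$. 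The same identification shows that the normalization of $\cL_{X,x}$ is $\widetilde{\cL}_{X,x}=F^{-1}(\cL_{X,x})=\bigsqcup_{y}F_{\ell,y}$, so $\chi(\widetilde{\cL}_{X,x})=m(x)-\sum_y\mu_y(\ell)$. Under the hypothesis $\dim_x\Sigma(f_{|V(L)})=0$, L\^e's bouquet theorem gives $\chi(\cL_{X,x})=1-(\Gamma_{f,L}^1\cdot V(L))_x$, and the normalization map $\widetilde{\cL}_{X,x}\to\cL_{X,x}$ identifies $m(x_i)$ points over each singular point $x_i\in B^\circ_\epsilon(x)\cap\Sigma\cap L^{-1}(\gamma)$, so $\chi(\widetilde{\cL}_{X,x})-\chi(\cL_{X,x})=\sum_i(m(x_i)-1)$. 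Combining these three identities and solving for $(\Gamma_{f,L}^1\cdot V(L))_x-\operatorname{rank}\operatorname{coker}\{r_{X,x}\}$ produces the asserted branch-count formula.

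The main obstacle is the Euler characteristic bookkeeping of the last step: one must check that the ``$-1$''-type corrections coming from the basepoint of the bouquet and from the normalization of each isolated singularity of $\cL_{X,x}$ cancel cleanly against the branch-count differences, and that the genericity hypothesis $\dim_x\Sigma(f_{|V(L)})=0$ is strong enough both to apply L\^e's bouquet theorem to $\cL_{X,x}$ and to guarantee that each $F_{\ell,y}$ has the expected homotopy type of a bouquet of $1$-spheres. Everything else reduces to invocations of \propref{prop:onedegree} and \corref{cor:vanvan} combined with the $F_*\Z^\bullet_M[n]$ description of $\Idot_X$.
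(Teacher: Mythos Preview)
Your treatment of the first block and of the first equality in the ``furthermore'' block is exactly the specialization the paper intends: the paper gives no separate argument, treating the corollary as an immediate consequence of \propref{prop:onedegree}, the $s=1$ version of \corref{cor:vanvan}, and the stalk description of $\Ndot_X$ in the parameterized setting.

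For the final equality you take a genuinely different route from the one implicit in the paper. The direct argument stays inside the perverse framework: from the defining short exact sequence of $\Ndot_X$ one sees that $(\Gamma^1_{f,L}\cdot V(L))_x-\operatorname{rank}\operatorname{coker}\{r_{X,x}\}=\operatorname{rank}H^0(\phi_L[-1]\Ndot_X)_x$, and this last rank is read off from the nearby/vanishing triangle for $L$ restricted to the curve $\Sigma$, using only that $\Ndot_X$ has stalk $\Z^{m(\cdot)-1}$ in the single degree $-1$. Your Euler-characteristic route through the normalization $\widetilde{\cL}_{X,x}\to\cL_{X,x}$ is more geometric and is also valid, but it costs you the extra verification (which you correctly flag) that $\ell=L\circ F$ has isolated critical points at each $y\in F^{-1}(x)$; the direct $\Ndot_X$ computation needs only that $V(L)\cap\Sigma$ is $0$-dimensional near $x$.

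One concrete point about the bookkeeping you were worried about: carrying either computation through gives
\[
(\Gamma^1_{f,L}\cdot V(L))_x-\operatorname{rank}\operatorname{coker}\{r_{X,x}\}\;=\;\sum_i\big(m(x_i)-1\big)-\big(m(x)-1\big),
\]
which agrees with the displayed $-m(x)+\sum_i m(x_i)$ precisely when there is a single nearby point $x_i$. So your caution about the ``$-1$''-type corrections is well placed; the cancellation you hoped for does not occur in general, and the honest output of both methods is the expression above.
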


\end{document}